\documentclass[12pt]{article}
\usepackage{graphicx}

\usepackage{amssymb,amsmath,amsthm}
\usepackage{mathrsfs}
\usepackage{bbold}
\usepackage{enumitem}
\usepackage{bm}
\usepackage{etoolbox}
\usepackage{titlesec}
\newcommand{\subscript}[2]{$#1 _ #2$}
\usepackage[colorlinks=true]{hyperref}
\usepackage{color}
\newcommand{\blue}{}
\newcommand{\black}{}


\def\saf{s\alpha_f}
\def\of{\omega_f}
\def\epsilon{\varepsilon}

\def\bC{{\mathbb{C}}}
\def\bR{{\mathbb{R}}}

\def\CPo{{\mathbb{CP}^1}}
\def\RPo{{\mathbb{RP}^1}}

\def\cK{{\mathcal K}}

\def\cR{{\mathcal R}}

\def\cT{{\mathcal T}}

\def\Jall{J^{int}}

\newcommand{\bZ}{{\mathbb Z}}
\newcommand{\bN}{{\mathbb N}}

\newtheorem*{theorem*}{Theorem}
\newtheorem{thm}{Theorem}
\newtheorem{theoremX}{Theorem}




\newtheorem*{conjecture*}{Conjecture}
\newtheorem{defn}{Definition}
\newtheorem{esempio}{Example}
\newtheorem{prop}{Proposition}
\newtheorem{rmk}{Remark}
\newtheorem{cor}{Corollary}
\newtheorem{lm}{Lemma}


\begin{document}

\markboth{Roberto De Leo}{Backward asymptotics in S-unimodal maps}

\title{%
  Backward asymptotics in S-unimodal maps.
}

\author{Roberto De Leo\\ \small Howard University, Washington DC 20059 (USA)\\ \small INFN, Cagliari (Italy)}

\maketitle
%
\vspace{0.5cm}
\centerline{\em To Jim Yorke, on the occasion of his 80th brithday.}
\vspace{0.5cm}
\begin{abstract}
  While the forward trajectory of a point in a discrete dynamical system is always unique, in general a point can have infinitely many
  backward trajectories. The union of the limit points of all backward trajectories through $x$ was called by M.~Hero the
  ``special $\alpha$-limit'' ($s\alpha$-limit for short) of $x$. In this article we show that there is a hierarchy of $s\alpha$-limits
  of points under iterations of a S-unimodal map: the size of the $s\alpha$-limit of a point increases monotonically as the point gets
  closer and closer to the attractor. The $s\alpha$-limit of any point of the attractor is the whole non-wandering set.
  This hierarchy reflects the structure of the graph of a S-unimodal map recently introduced jointly by Jim Yorke and the present author.
\end{abstract}  

\section{Introduction}
Our interest in $s\alpha$-limits comes from their relation to the edges of the graph of a dynamical system,
that we studied in a recent joint work with J.~Yorke~\cite{DLY20} in case of 
S-unimodal maps. What we ultimately show in~\cite{DLY20} is that, in the 1-dimensional real case, there is a hierarchy among
repellors so that points arbitrarily close to a given repellor have trajectories that asymptote to all repellors below him in this
hierarchy and to no other repellor. In~\cite{DLY21} we present numerical evidence that this phenomenon is common also in multidimensional
real discrete dynamical systems.

Due to this hierarchy, backward limits are much more complicated than in the 1-dimensional {\em complex} case,
where there is, instead, a unique repellor, the Julia set, and this repellor is both backward and forward invariant.
We believe that this is the main reason why the standard definition of $\alpha$-limit was never questioned
by the discrete complex dynamics community.
In this article, we argue that the concept of $s\alpha$-limit is a more suitable analogue of $\omega$-limit and
use our results in~\cite{DLY20} to find the $s\alpha$-limit of points of the interval under a S-unimodal map.

The idea of encoding the qualitative behavior of a dynamical system in a graph goes back to S.~Smale that, in~\cite{Sma67},
proved that the non-wandering set $\Omega_f$ of an Axiom-A diffeomorphism $f$ of a compact manifold decomposes in a unique way
as the finite union of disjoint, closed, invariant indecomposable subsets $\Omega_i$, on each of which $f$ is topologically transitive.
Hence one can associate to $f$ a directed graph, describing its qualitative dynamics, whose nodes are these $\Omega_i$ and such that
there is an edge from $\Omega_j$ to $\Omega_i$ if the stable manifold of $\Omega_i$ intersects the unstable manifold of $\Omega_j$.

A decade later C.~Conley, in~\cite{Con78}, widely generalized this idea to continuous finite-dimensional dynamical systems replacing
the non-wandering set with the chain-recurrent set. One of the main advantages of using the chain-recurrent set is
that on this set there is a natural equivalence relation, that does not extend in general to the non-wandering set,
whose equivalence classes turns out to be the exact analogue of the indecomposable subsets of Axiom-A diffeomorphisms.
Conley's construction was later generalized to discrete dynamics by D.~Norton~\cite{Nor95b} and to several other settings
by other authors (see~\cite{DLY21} for a detailed bibliography on the subject).

The chain-recurrent set $\cR_f$ 
of a dynamical system $f$ on $X$ is defined via $\epsilon$-chains, namely finite sequences
of points each of which is within $\epsilon$ from the image of the previous one under $f$ (see the next section for more precise definitions
about the concepts used in this introduction).
A point $x\in X$ is {\em upstream} from a point $y\in X$ if there is an $\epsilon$-chain that starts at $x$ and ends at $y$ for every $\epsilon>0$.
We also say that $y$ is {\em downstream} from $x$.
A point is chain-recurrent if it is upstream (or, equivalently, downstream) from itself.

The relation
$$
x\sim y\hbox{ if }x\hbox{ is both upstream and downstream from }y
$$
is an equivalence relation. We call nodes its
equivalence classes and we associate to $f$ the graph $\Gamma_f$ having the nodes of $\cR_f$ as its nodes and so that
there is an edge from node $A$ to node $B$ if there is a bi-infinite trajectory (bitrajectory)
$$t=(\dots,t_{-2},t_{-1},t_0,t_1,t_2,\dots),$$
whose limit points for $n\to-\infty$ lie in $A$ and those for $n\to+\infty$ lie in $B$.
The dynamical relevance of $\Gamma_f$ comes from Conley's decomposition theorem:
given a continuous map $f$ of a compact metric space $X$ into itself, a point $x\in X$ either belongs to a node
of $\Gamma_f$, and so it has some kind or ``recurrent'' dynamics, or is ``gradient-like'', namely it defines an edge
between two distinct nodes of $\Gamma_f$ (see~\cite{Nor95b}, Thm.~4.10).

In~\cite{DLY20} we studied the structure of the graph of a S-unimodal map and proved that such graph is always a tower,
namely that there is an edge between each pair of nodes.
Since there cannot be loops in the graph of a dynamical system, this means that there is a
linear hierarchy among all nodes: there is a first node $N_0$ that has edges towards each other node, a node $N_1$ that has edges
towards each other node except $N_0$ and so on up to the attracting node $N_p$. Each node but one is repelling and 
every repelling node, which can be either a cycle (for short we call {\em cycle} a periodic orbit) or a Cantor set,
has an edge towards the attracting node. 

Figure~\ref{fig:bd} shows the nodes of the logistic map $\ell_\mu(x)=\mu x(1-x)$, $\mu\in[2.9,4]$, that are visible
at the picture's resolution.
The attracting nodes are painted in shades of gray, representing the density of a generic orbit of the attractor.
There are five kinds of attracting nodes: type $A_1$, a cycle; type $A_2$, a cycle of intervals; type $A_3$, an adding machine;
type $A_4$, a 1-sided attracting cycle belonging to a repelling Cantor set (taking place at the beginning of each window);
type $A_5$ (see Thm.~\ref{thm:AttractingNodes} about this complicated case, taking place at the end of each window). 
The repelling nodes are painted either in green (if they are cycle nodes) or in red (if they are Cantor set nodes).
The picture also shows the curves $c_k=\ell_\mu^k(c)$, $k=1,\dots,4$, where $c=1/2$ is the critical point of the logistic map.
The points $c_k$ play a fundamental role in the forward and backward dynamics of S-unimodal maps.
\begin{figure}
 \centering
 \includegraphics[width=18cm]{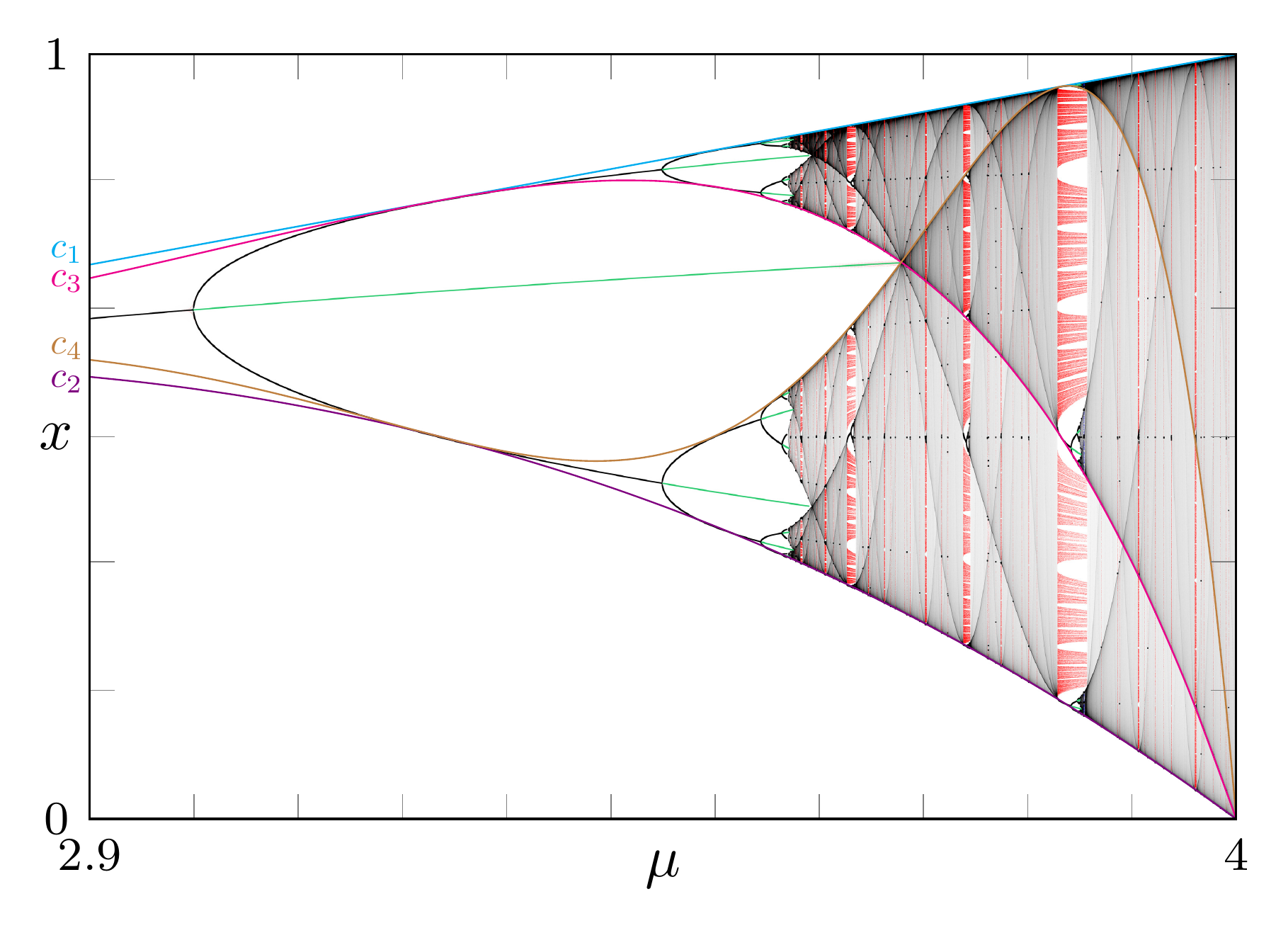}
  \caption{{\bf Bifurcations diagram of the logistic map.}
    This picture shows the nodes of the logistic map $\ell_\mu(x)=\mu x(1-x)$ in the parameter range $[2.9,4]$ that are visible at this resolution.
    Attractors are painted in shades of gray (depending on the density), repelling periodic orbits in green and repelling Cantor sets in red.
    The colored lines labeled by $c_k$ are the lines $\ell_\mu^k(c)$, where $c=0.5$ is the critical point of $\ell_\mu$.
    The black dots that are visible within the diagram are points belonging to low-period cycles, signaling the presence of bifurcation cascades
    within some window. The fact that some of them keep close to the line $x=c$ is a reflection of Singer's Theorem: when the attractor
    is a periodic orbit, $c$ belongs to its immediate basin.
    Notice that, for each $\mu$, all nodes but $N_0=0$ lie within the interval $[c_2,c_1]$ since points in $(c_1,1]$ are outside the range of the map
    and, for $\mu\geq2$, all points in $(0,c_2)$ eventually fall in $[c_2,c_1]$, which is forward invariant.
    See Sec.~\ref{sec:bd} for further comments of this figure.
  }
  \label{fig:bd}
\end{figure}

The main goal of the present article is to use our results in~\cite{DLY20} to study the structure of the $s\alpha$-limit sets
under S-unimodal maps. 
After presenting, in Sections~2 and~3, all definitions and main results we need from literature, we study in Section~4 the
backward dynamics of points under a S-unimodal map $f:[0,1]\to[0,1]$, $f(0)=0$. 
\blue 
Except when $N_p$ is of type $A_5$  (see Thm.~\ref{thm:main} for a complete statement),
our main result can be formulated as follows.
When $f$ has $p$ nodes $N_0,\dots,N_p$, $p\geq1$, then
\black
$[0,1]$ can be written as the disjoint union of sets $U_{-1},U_0,\dots,U_p$ such that:
\begin{enumerate}
\item each $U_k$, {\blue $k<p$}, is the finite union of one or more intervals (these intervals can be open, half-open or closed);
\item $U_{-1}=(c_2,1]$ is the set of all points that have no bitrajectory passing through them;
\item $U_0=[0,c_2)$ is the set of all points with a single bitrajectory, which necessarily asymptotes backward to $0$;
\item $N_k\subset U_k$ for each $k=0,\dots,p$;
\item {\blue $U_p=N_p$};
\item {\blue $s\alpha_f(x)=\cup_{i=0}^kN_i$ for each $x\in U_k$, $k=0,\dots,p$;}
\end{enumerate}
%
%
Notice that proving this theorem requires different techniques for points that are not chain-recurrent and points
that are so and, for chain-recurrent points, it requires different techniques for different types of nodes.
%

Figs.~\ref{fig:U1}--\ref{fig:U3} show examples of these $U_k$ sets in several cases for the logistic map. For instance,
let us describe $U_1$ in several cases when $\ell_\mu$ has three nodes.
When $\mu$ is close enough to 3 from the right, $N_1=\{\bar p_\mu\}$
is the non-zero fixed point of $\ell_\mu$ and $N_2=\{p_1,p_2\}$, $p_1<p_2$, a stable 2-cycle. In this case,
$$
U_1 = [c_2,p_1)\cup(p_1,p_2)\cup(p_2,c_1].
$$

When $\mu$ is a chaotic parameter with just three nodes and close enough from the left to $\mu_M\simeq3.679$,
which is the first parameter value at which $c_3$ and $c_4$ meet in Fig.~\ref{fig:bd}, then $N_1=\{\bar p_\mu\}$
is again the non-zero fixed point of $\ell_\mu$ but this time $N_2=[c_2,c_4]\cup[c_3,c_1]$. Hence, in this case,
$$
U_1 = (c_4,c_3).
$$

When $\mu$ is close enough from the right to $\mu_0=1+2\sqrt{2}$, which is the left endpoint of the period-3 window
(see Figs.~\ref{fig:bd} and~\ref{fig:T2}), then $N_1$ is a Cantor set (painted in red) and $N_2=\{p_1,p_2,p_3\}$, $p_3<p_1<p_2$,
is a stable 3-cycle (painted in black). Hence
$$
U_1 = [c_2,p_3)\cup(p_3,p_1)\cup(p_1,p_2)\cup(p_2,c_1].
$$



Our main result above achieves also two sub-goals. First, it proves for S-unimodal maps Conjecture~1 (see Sec.~2) by Kolyada, Misiurewicz
and Snoha, namely that $s\alpha$ limits are closed sets for all $x\in[0,1]$.
Second, it supports several numerical observations by the author in planar discrete dynamical systems~\cite{DL20} suggesting that,
in case of real Newton maps on the plane, besides a non-trivial open set of points with backward trajectories that asymptote to
(almost) the whole Julia set, just as it happens in the complex case (Theorem~\ref{thm:saC}), there are also non-trivial open sets
from which one get smaller subsets of the non-wandering set or even the empty set.
\section{Forward and Backward limits}
The study of (time-)discrete dynamical systems, namely iterations of a continuous map $f:X\to X$ on a metric space $(X,d)$,
goes back to the introduction of Poincar\'e maps~\cite{Poi90,Poi99} as a tool to study the qualitative
behavior of the integral trajectories of a vector field. One of the main goals of the field is finding the asymptotics of points under a
given map $f$: where a point $x\in X$ ultimately ends up (the ``$\omega$-limit'' of $x$)
and where it might come from (the ``$\alpha$-limit'' of $x$).

\subsection{Forward asymptotics.}
The asymptotics in the forward direction present no ambiguity: given a point $x\in X$, the set
$$
\omega_f(x)=\bigcap_{n\geq0}\;\overline{\bigcup_{m\geq n}\{f^m(x)\}}
$$
consists in the accumulation points of the forward orbit of $x$, namely the sequence $\{f(x),f^2(x),f^3(x),\dots\}$ where,
as usual, we use the notation
$$
f^n(x)=f(f^{n-1}(x)).
$$
The setting where discrete dynamics has been possibly more successful to date is
iterations of holomorphic (and therefore rational) maps on the Riemann sphere $\CPo$. In this setting, $F_f$ (Fatou set)
denotes the largest open set over which the family of iterates $\{f^n\}$ is normal and $J_f$ (Julia set) its complement
and we have the fundamental full classification below.
\blue
\begin{defn}
  We use {\bf cycle} as a synonim for periodic orbit. By $n$-cycle we mean a periodic orbit of period $n$
  (so, in particular, a $n$-cycle has $n$ elements). We denote by $|\gamma|$ the period of a cycle $\gamma$.
\end{defn}
\black
\begin{theoremX}[{\bf $\bm\omega$-limits in $\bm\bC$} \cite{Sul85}]
  \label{thm:oC}
  Let $f:\CPo\to\CPo$ be a rational function of degree larger than 1. Then:
  \begin{enumerate}
    \item if $x\in J_f $, then $\omega_f(x)$ is either a repelling cycle or the whole $J_f$;
    \item if $x\in F_f$, then $\omega_f(x)$ is either a hyperbolic or parabolic attracting cycle or a Jordan curve.
  \end{enumerate}
\end{theoremX}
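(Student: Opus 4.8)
The plan is to prove the two items separately according to whether $x$ lies in the Julia set $J_f$ or in the Fatou set $F_f$, using throughout that both sets are completely invariant and that $\deg f>1$ forces $J_f$ to be an infinite perfect set on which the repelling periodic points are dense and on which $f$ is topologically exact (every relatively open $V\subseteq J_f$ satisfies $f^N(V)=J_f$ for some $N$).

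For item 2, fix $x\in F_f$; its forward orbit stays in $F_f$, so $\omega_f(x)$ is governed by the component dynamics of the Fatou set. First I would invoke the \emph{No Wandering Domains} theorem of \cite{Sul85}: every connected component of $F_f$ is eventually periodic. Hence the orbit of $x$ eventually enters a cycle of components $U_0\to\cdots\to U_{q-1}\to U_0$, and it suffices to analyse the first-return map $g=f^q$ on $U_0$. Applying the classification of periodic Fatou components --- which is read off from local normal forms of $g$ and which for a rational map leaves only four possibilities --- I would treat each case in turn: on an attracting (hyperbolic) basin, Koenigs/B\"ottcher linearization shows every orbit converges to the attracting cycle; on a parabolic basin, the Leau--Fatou flower theorem gives convergence to the parabolic cycle lying on $\partial U_0\subseteq J_f$; and on a Siegel disk or Herman ring $g$ is conformally conjugate to an irrational rotation of a disk or annulus, so the orbit of $x$ is dense in the image of a rotation-invariant circle, whose closure is an invariant analytic Jordan curve. (Baker domains do not occur because $f$ is rational.) This exhausts the alternatives of item 2.

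For item 1, fix $x\in J_f$; since $J_f$ is closed and forward invariant the set $A:=\omega_f(x)$ is a nonempty closed forward-invariant subset of $J_f$. I would first dispose of the ``large'' case: if $A$ has nonempty interior relative to $J_f$, then topological exactness gives $f^N(\mathrm{int}_{J_f}A)=J_f$ for some $N$, while forward invariance gives $f^N(\mathrm{int}_{J_f}A)\subseteq A$, whence $A=J_f$. In the complementary case $A$ is nowhere dense in $J_f$, and the goal is to show that such a set, being additionally recurrent and chain transitive as an $\omega$-limit, can only be a single repelling periodic orbit; here I would use the density of repelling cycles together with the contraction of the inverse branches of $f$ near $J_f$ to rule out the persistence of non-periodic recurrence.

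The Fatou part is the soft part: it is essentially the No Wandering Domains theorem plus the standard local models, and I expect no real obstruction there. The genuine difficulty is the dichotomy in item 1 --- excluding proper, infinite, non-periodic $\omega$-limit sets inside $J_f$. The empty-interior reduction is immediate, but the passage from ``nowhere dense and recurrent'' to ``repelling periodic orbit'' is the crux, since one must rule out minimal yet non-periodic invariant subsets; I expect this step to require genuinely the expanding structure of $f$ transverse to $J_f$ rather than any purely topological argument, and it is the part on which I would concentrate the proof.
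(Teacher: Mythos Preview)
The paper does not prove this theorem; it is quoted as a background result attributed to Sullivan~\cite{Sul85} and used only to motivate the real-variable analogues that follow. There is therefore no proof in the paper to compare your proposal against.

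That said, a brief comment on the proposal itself. Your outline for item~2 is the standard route (No Wandering Domains plus the classification of periodic Fatou components) and would go through. For item~1 you correctly flag the crux and concede that you do not have an argument ruling out nowhere-dense, non-periodic $\omega$-limit sets inside $J_f$. That difficulty is not merely technical: the dichotomy as stated in the paper is already a simplification. A parabolic periodic point lies in $J_f$ and has $\omega$-limit equal to its own (parabolic, not repelling) cycle; Cremer points behave similarly; and points on the boundary of a Siegel disk can have $\omega$-limit equal to that invariant boundary curve, which is neither a cycle nor all of $J_f$. So the obstacle you ran into reflects an overstatement in the quoted formulation rather than a gap in your strategy.
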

Recall that, in the complex setting, the duality {\em non-chaotic} versus {\em chaotic} dynamics coincides with
the duality {\em attracting}  versus {\em repelling} set. In the real case, except in particular cases such as
Newton maps (e.g. see~\cite{DL18}), the two dualities are independent (for instance, there are chaotic attractors).
In regard with orbits asymptotics, the latter is the most relevant. An analogue of Theorem~\ref{thm:oC}
for the real case is given by the following weaker result:
%
\begin{theoremX}[{\bf $\bm\omega$-limits in $\bm\bR$}~\cite{MMS92}]
  \label{thm:oR}
  Let $f$ be either a generic non-invertible $C^2$ self-map of $\RPo$ or a S-multimodal map
  on the interval. Then, for almost all
  $x$, $\omega_f(x)$ can be  of the following three types:
  \begin{enumerate}
    \item a cycle;
    \item a minimal Cantor set;
    \item a finite union of closed intervals containing a critical point \blue and on which $f$ acts transitively (see Def.~\ref{def:transitive}).
  \end{enumerate}
  %
\end{theoremX}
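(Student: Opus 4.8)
The plan is to transport Sullivan's strategy behind Theorem~\ref{thm:oC} to the real setting: the role played in $\CPo$ by the no-wandering-domains theorem and by the classification of Fatou components is taken over, on the interval (and on $\RPo$), by a no-wandering-intervals theorem and by a classification of metric attractors. The one structural feature that makes the $S$-(multi)modal class amenable is the negative Schwarzian derivative, and every estimate below would ultimately rest on it through the Koebe distortion principle and the fact that maps with negative Schwarzian do not decrease the natural cross-ratio of nested intervals. As a preliminary reduction I would invoke Singer's theorem, by which the immediate basin of every attracting or neutral cycle contains a critical point; since there are only finitely many critical points, there are only finitely many such cycles, and the (open) set of points falling into one of their basins has $\omega_f(x)$ of type~(1).

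The core of the proof, and the step I expect to be overwhelmingly the hardest, is the absence of wandering intervals: there is no nondegenerate interval $J$ whose forward images $f^i(J)$ are pairwise disjoint while $\omega_f(x)$ fails to be a cycle for $x\in J$. The idea is to argue by contradiction. Disjointness forces $\sum_i |f^i(J)| \le 1$, so $|f^i(J)| \to 0$; one then reconstructs $J$ and its neighbors by pulling back along the chains of monotone branches joining successive images, and the negative-Schwarzian distortion estimates bound the nonlinearity of these pullbacks whenever a definite amount of ``Koebe space'' is available. Combining the summability with the bounded distortion, one finds that $J$ would in fact have to be attracted to a periodic orbit, contradicting that it is wandering. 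This is precisely the point at which the $C^2$ regularity, the non-flatness of the critical points, and (on $\RPo$) the genericity hypothesis used to rule out Denjoy-type counterexamples all become indispensable.

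Granting no wandering intervals, I would classify $\omega_f(x)$ for a typical non-preperiodic $x$ according to the topological type of its closure. If $\omega_f(x)$ is nowhere dense, the absence of wandering intervals forces the first-return dynamics on it to be a nested tower of renormalizations with the combinatorics of an adding machine; the attractor is then a minimal Cantor set, giving type~(2). If instead $\omega_f(x)$ has nonempty interior, one shows it is a finite cyclic union $I_1,\dots,I_k$ of closed intervals permuted by $f$ on which the first-return map is topologically transitive; since a transitive interval map cannot be monotone, at least one $I_j$ must fold, i.e.\ contain a critical point, which is type~(3). Finally, the ``almost all $x$'' clause is obtained by discarding the Lebesgue-null set of points whose orbit eventually meets a critical point together with the set of points attracted to exceptional (measure-zero, non-attracting) Cantor sets; bounding the measure of the latter is again a consequence of the same distortion control, and is the last place where the $S$-(multi)modal hypothesis does real work.
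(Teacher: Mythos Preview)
The paper does not contain a proof of this theorem. Theorem~\ref{thm:oR} is stated as a cited background result, attributed to Martens, de~Melo and van~Strien~\cite{MMS92}, and is used only to contextualize the paper's own results on backward asymptotics; no argument for it is given or sketched anywhere in the text. There is therefore nothing against which to compare your proposal.

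For what it is worth, your outline is a fair high-level summary of the actual strategy in the cited literature: Singer's theorem to bound the number of attracting cycles, the no-wandering-intervals theorem (which is indeed the deep ingredient, and precisely what \cite{MMS92} establishes under the stated hypotheses), and then the dichotomy between nowhere-dense $\omega$-limits (leading to solenoidal Cantor attractors) and $\omega$-limits with interior (leading to cycles of intervals containing a critical point). One caveat: your last paragraph slightly conflates two issues. The ``almost all $x$'' clause here does not require discarding points attracted to ``exceptional Cantor sets''; rather, once wandering intervals are excluded, every $\omega$-limit set of a non-preperiodic point already falls into one of the three types listed, and the null set one discards consists only of the countably many preimages of the critical set and of repelling periodic points. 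The subtler measure-theoretic question of whether a solenoidal Cantor set can attract a positive-measure set without being a metric attractor (the ``wild attractor'' phenomenon) is a separate matter and not needed for the statement as phrased.
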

The last case is the one where the dynamics within the attractor is chaotic,
\blue
in the sense of sensitivity to initial conditions
(see~\cite{Kol04} and~\cite{Rob08} for thorough discussions on alternate definitions of chaotic attractors).
\black
\subsection{Backward asymptotics.}
The asymptotics in the backward direction have been explored much less, even in the two settings mentioned above.
Traditionally, the $\alpha$-limit of a point is defined analogously to the $\omega$-limit:
$$
\alpha_f(x) = \bigcap_{n\geq0}\;\overline{\bigcup_{m\geq n}\{f^{-m}(x)\}}\,.
$$
Notice that, in this case, in general $f^{-m}(x)$ consists in more than a single point and the growth
of the number of counterimages with $m$ can be exponential.
Very few theorems are known on $\alpha_f(x)$. We quote below the major ones known to us:
%
\begin{theoremX}[{\bf $\bm\alpha$-limits in $\bm\bC$}~\cite{Fat20b}]
  \label{thm:aC}
  Let $f:\CPo\to\CPo$ be a rational function of degree larger than 1. Then, for all $z\in\CPo$, with at most two exceptions,
  $\alpha_f(z)\supset J_f$. Moreover, $\alpha_f(z)=J_f$ if and only if $z$ belongs to either $J_f$ or to the basin
  of attraction of some stable cycle of $f$ (but not to the cycle itself).
\end{theoremX}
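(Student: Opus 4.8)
The plan is to treat the two assertions separately and to lean throughout on two structural facts about rational maps: the complete invariance $f^{-1}(J_f)=J_f$ and $f^{-1}(F_f)=F_f$, and the characterization of $J_f$ as the set on which no subfamily of $\{f^n\}$ is normal. A convenient reformulation I would use repeatedly is that $\zeta\in\alpha_f(z)$ if and only if every neighborhood $V$ of $\zeta$ satisfies $z\in f^n(V)$ for infinitely many $n$ (equivalently, $V$ meets $f^{-n}(z)$ for arbitrarily large $n$).

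First I would prove the inclusion $\alpha_f(z)\supseteq J_f$ for all but two points. Fix $w\in J_f$ and a neighborhood $U$ of $w$. Since $\{f^n\}$ is not normal on $U$, Montel's theorem gives that $\bigcup_{n\ge0}f^n(U)$ omits at most two points of $\CPo$, and these omitted points lie in a fixed finite \emph{exceptional set} $E_f$ with $|E_f|\le2$, consisting of (super)attracting cycle points and hence contained in $F_f$. Applying this not to $U$ but to the open set $f^M(U)$ for each $M$, I obtain $z\in f^n(U)$ for infinitely many $n$ as soon as $z\notin E_f$; by the reformulation above this means $w\in\alpha_f(z)$, and letting $w$ range over $J_f$ yields $J_f\subseteq\alpha_f(z)$. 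For the (at most two) points of $E_f$ the full backward orbit is finite, so $\alpha_f(z)$ is finite and cannot contain $J_f$; these are exactly the exceptions.

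For the equality statement I would first settle the ``if'' direction. If $z\in J_f$, complete invariance forces the whole backward orbit into $J_f$, so $\alpha_f(z)\subseteq J_f$, and with the inclusion already proved $\alpha_f(z)=J_f$. If instead $z$ lies in the basin $B$ of a stable cycle $C$ but not on $C$, then $B$ is completely invariant, so the backward orbit stays in $B\subseteq F_f$; the accumulation points in $F_f$ are controlled by the fact that $f^n\to C$ uniformly on compact subsets of $B$. Concretely, if some $w\in F_f$ were in $\alpha_f(z)$, there would be preimages $w_k\to w$ with $f^{n_k}(w_k)=z$ and $n_k\to\infty$, and uniform convergence would force $z=\lim f^{n_k}(w_k)\in C$, contradicting $z\notin C$. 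Hence $\alpha_f(z)\cap F_f=\emptyset$ and $\alpha_f(z)=J_f$.

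The converse --- if $z\notin J_f$ is non-exceptional and does not lie in such a basin, then $\alpha_f(z)\supsetneq J_f$ --- is where the real work lies, and I expect it to be the main obstacle. Using the reformulation, a Fatou point $\zeta$ can lie in $\alpha_f(z)$ only if the forward orbit of the Fatou component containing $\zeta$ revisits the component $\Omega_0\ni z$ infinitely often, which by Sullivan's no-wandering-domains theorem forces $\Omega_0$ to be periodic and, moreover (by the basin argument just given), the iterates of $f$ on $\Omega_0$ not to converge to a single cycle point. By the classification of periodic Fatou components, the cases surviving after the (super)attracting and parabolic basins are the rotation domains (Siegel disks and Herman rings) and the points lying on an attracting cycle. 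In a periodic rotation domain $f^{q}|_{\Omega_0}$ is conjugate to an irrational rotation, so the backward iterates $(f^{q}|_{\Omega_0})^{-j}(z)$ remain in $\Omega_0$ and accumulate on a full invariant circle lying in $F_f$, giving $\alpha_f(z)\supsetneq J_f$; and when $z$ lies on an attracting cycle, that cycle recurs in its own backward orbit and hence belongs to $\alpha_f(z)\setminus J_f$ --- this is precisely the case excluded by the parenthetical ``(but not to the cycle itself)''. The delicate point I would need to handle with care is exactly the interface between the normal-family/exceptional-set input (which only detects $J_f$) and the component-level bookkeeping that separates basins from rotation domains; verifying that a \emph{periodic} rotation component genuinely occurs along a backward branch of $z$, rather than merely in its forward orbit, is the crux, and is what makes ``stable cycle'' the sharp dividing line.
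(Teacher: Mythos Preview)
The paper does not prove this theorem. It is quoted as a classical background result attributed to Fatou~\cite{Fat20b}, listed among the ``Theorem~A, B, C, \dots'' statements that summarize the state of the art before the paper's own contributions begin in Section~\ref{sec:LimitSets}. There is therefore no proof in the paper to compare your proposal against.

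For what it is worth, your sketch follows the standard modern route (Montel for the inclusion $J_f\subset\alpha_f(z)$, complete invariance and uniform convergence on basins for the equality, Sullivan plus the classification of periodic components for the converse). One point deserves care: your own bookkeeping shows that if $z$ lies in a \emph{strictly preperiodic} Fatou component, then no Fatou component is visited infinitely often by the backward orbit, so $\alpha_f(z)=J_f$ --- regardless of whether that component is preperiodic to a basin or to a rotation domain. This means the equivalence as literally stated (``basin of a stable cycle'') is slightly off in the presence of Siegel disks or Herman rings, which is historically unsurprising since Fatou's 1920 work predates both. You may want to flag that the precise ``if and only if'' needs the clause ``or $z$ lies in a non-periodic Fatou component'' added, or else the hypothesis that $f$ has no rotation domains.
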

As in case of $\omega$-limits, also the theorem above has a, much weaker, real analogue:
\begin{theoremX}[{\bf $\bm\alpha$-limits in $\bm\bR$}~\cite{dMvS93}]
  \label{thm:aR}
  Let $f:\RPo\to\RPo$ be a generic analytic map and denote by $Z_f$ its set of critical points.
  Then $\alpha_f(Z_f)=J_f$.
\end{theoremX}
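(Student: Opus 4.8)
The plan is to adapt Fatou's argument in the complex case (Theorem~\ref{thm:aC}) to the real-analytic setting, proving the two inclusions $\alpha_f(Z_f)\subseteq J_f$ and $J_f\subseteq\alpha_f(Z_f)$ separately. Throughout I would use that $J_f$ is closed and completely invariant, $f^{-1}(J_f)=J_f=f(J_f)$, and that the genericity hypothesis forces the critical points to be neither periodic nor exceptional (their grand orbits are infinite) and rules out rotation domains, so that every Fatou component is eventually periodic and lands on the immediate basin of an attracting or parabolic cycle.

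For the inclusion $\alpha_f(Z_f)\subseteq J_f$ it suffices to show that no backward orbit of a critical point accumulates in the Fatou set $F_f$. Suppose instead that $w\in F_f$ is the limit of preimages $c_k\to w$ with $f^{n_k}(c_k)=c$, $n_k\to\infty$, for some $c\in Z_f$. On a neighborhood of $w$ the family $\{f^n\}$ is normal, so after passing to a subsequence $f^{n_k}\to g$ locally uniformly; then $c=f^{n_k}(c_k)\to g(w)$, i.e.\ $g(w)=c$. But since generically every periodic Fatou component is the basin of an attracting or parabolic cycle, $g$ is constant equal to a cycle point, forcing $c$ to be periodic, contrary to genericity. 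Hence all accumulation points of the backward orbit of $c$ lie in $J_f$, and taking the union over $Z_f$ gives $\alpha_f(Z_f)\subseteq J_f$.

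The reverse inclusion $J_f\subseteq\alpha_f(Z_f)$ rests on the density of backward orbits: for any non-exceptional $z$ one has $\overline{\bigcup_{n\geq0}f^{-n}(z)}\supseteq J_f$. This is the real-analytic analogue of the blowing-up property of the Julia set, and I would derive it from the expansion (hyperbolicity) of $f$ on the repeller $J_f$, or by passing to the complexification and combining Montel's theorem with the characterization $J_f=\overline{\{\text{repelling cycles}\}}$. Applying this to a non-exceptional critical point $c$, every tail $\bigcup_{m\geq n}f^{-m}(c)$ is itself the full backward orbit of $f^{-n}(c)$ and therefore dense in $J_f$; intersecting the closures over $n$ yields $\alpha_f(c)\supseteq J_f$, whence $\alpha_f(Z_f)\supseteq J_f$.

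Combining the two inclusions gives $\alpha_f(Z_f)=J_f$. The main obstacle is the density statement: in the complex case it is an immediate consequence of Montel's theorem, whereas in the real-analytic setting it must be justified intrinsically, controlling the interaction between the backward dynamics and the critical orbit. The genericity hypothesis is exactly what makes this controllable—it excludes rotation domains, critical points lying on (super)attracting cycles, and exceptional critical orbits, each of which would otherwise break one of the two inclusions above.
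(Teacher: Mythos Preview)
The paper does not prove this statement. Theorem~\ref{thm:aR} appears in Section~2 as a cited background result, attributed to de~Melo and van~Strien~\cite{dMvS93}, sitting alongside the complex theorems of Fatou and Sullivan that are likewise quoted without proof. It is invoked only to motivate the later discussion of $s\alpha$-limits; the paper supplies no argument of its own. So there is no ``paper's own proof'' to compare your proposal against.

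On the proposal itself: the two-inclusion strategy is sound, and your normality argument for $\alpha_f(Z_f)\subseteq J_f$ is essentially the right idea. The real content, however, is the reverse inclusion, and you explicitly defer it (``I would derive it from\dots''). In $\bC$ this density statement is a one-line consequence of Montel's theorem; on $\RPo$ there is no Montel theorem, and passing to the complexification does not by itself control the \emph{real} preimages of a real critical point. The actual proof in the de~Melo--van~Strien setting relies on intrinsically real one-dimensional machinery---distortion estimates, absence of wandering intervals, the structure of the postcritical set---rather than a normal-families argument. What you have written is an accurate outline with the hard step correctly located but not executed.
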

More recently, H.~Cui  and Y.~Ding~\cite{CD10} studied in detail the $\alpha$-limit sets of
S-unimodal maps.
\subsection{$\alpha$-limits are not an optimal analogue of $\omega$-limits.}
While the definition of $\alpha$-limit sets above has the appeal to be the precise analogue, replacing images
with counterimages, of the definition of $\omega$-limit sets, the example below suggests that it does
not represent necessarily the best answer to the question of ``where might $x$ come from''.
\begin{defn}
  A {\em bitrajectory} of a map $f:X\to X$ based at $x\in X$ is a bi-infinite sequence $\{x_i\}_{i\in\bZ}$
  such that:
  \begin{enumerate}
  \item $x_0=x$;
  \item $f(x_i)=x_{i+1}$ for all $i\in\bZ$.
  \end{enumerate}
\end{defn}
\begin{esempio}
  Consider the logistic map $\ell_\mu(x)=\mu x(1-x)$ for some $\mu\in(1,4)$. The point $x=0$ is a repelling
  fixed point and $f(1)=0$, so $\alpha_f(0)=\{0,1\}$. For $x_0$ small enough, the equation
  $\ell_\mu(x)=x_0$ has 2 roots $x_1,\hat x_1$ such that:
  \begin{enumerate}
  \item $x_1\in(0,x_0)$;
  \item $\hat x_1=1-x_1$ has no counterimage under $\ell_\mu$.
  \end{enumerate}
  Hence $f^{-m}(x_0)$ always consists in just two points $x_m,\hat x_m$, with $x_m\to0$ and $\hat x_m\to1$.
  Ultimately, $\alpha_f(x_0)=\{0,1\}$ for all $x_0$ small enough.

  On the other side,
  every bitrajectory $\{t_i\}$ of $\ell_\mu$ passing through $x_0$ satisfies
  $\lim_{n\to-\infty}t_{n}=0$. Indeed, clearly the accumulation points of $\{t_{-i}\}_{i\in\bN}$ belong to $\alpha_f(0)$
  and, as long as $\mu<4$, $\ell_\mu(0.5)<1$ and so there is some neighborhood of 1 which is not in $\ell_\mu([0,1])$.
  In particular, no point close enough to 1 can belong to a bitrajectory of $\ell_\mu$.
\end{esempio}
In fact, an equivalent definition for $\alpha_f(x)$ is given by the following~\cite{Her92}:
\begin{defn}
$\alpha_f(x)$ is the set of $y\in X$
for which there is a sequence of points $\{y_n\}$ and a sequence of monotonically increasing positive numbers $k_n$ such that:
\begin{enumerate}
  \item $f^{k_n}(y_n)=x$;
  \item $\lim_{n\to\infty}y_n=y$.
\end{enumerate}
\end{defn}
Namely, arbitrarily close to each point in $\alpha_f(x)$, there are points whose forward trajectory passes through $x$.

The discussion above shows that, while formally there is a perfect symmetry between the definition of $\alpha$-limit and the one
of $\omega$-limit, when a map is not invertible this symmetry does not extend to the sets obtained from those definitions:
the $\omega$-limit set of a point $x$ contains only (and all) points that are asymptotically approached by $x$ under $f$,
while its $\alpha$-limit contains also points that cannot be asymptotically approached, going backward, by a bitrajectory.

\subsection{$s\alpha$-limits are a better analogue of $\omega$-limits.}
A definition that leads to backward limit sets which are symmetric, from the point of view highlighted above, to $\omega$-limit
sets can be achieved by slightly modifying the definition above:
\begin{defn}[\cite{Her92}]
  Let $x\in X$. The {\em special $\alpha$-limit} $s\alpha_f(x)$ is the set of all $y\in X$ for which there is
  a sequence of points $\{y_n\}$ and a sequence of monotonically increasing positive numbers $k_n$ such that:
\begin{enumerate}
  \item $y_0=x$;
  \item $f^{k_n}(y_n)=y_{n-1}$;
  \item $\lim_{n\to\infty}y_n=y$.
\end{enumerate}
\end{defn}
The symmetry between $\saf$ and $\of$ becomes evident when we write them in terms of limit points of
trajectories $t$.
\begin{defn}
  Let $t$ be a bitrajectory. We denote by $\omega(t)$ (resp. $\alpha(t)$) the set of all forward (resp. backward) limit points of $t$.
\end{defn}
\begin{prop}
  Let $f:X\to X$ and $x\in X$. Then:
  \begin{enumerate}
  \item $\omega_f(x) = \omega(t)$ for any bitrajectory of $f$ based at $x$;
  \item $s\alpha_f(x) = \cup\alpha(t)$, where the union is over all bitrajectories of $f$ based at $x$.
  \end{enumerate}
\end{prop}
%
Not much is known on general properties of $s\alpha$-limit sets. The strongest claim related to them, in
the author's knowledge, is relative to infinite backward trajectories of rational maps on the Riemann sphere:
\begin{theoremX}[\cite{HT03}]
  \label{thm:saC}
  Let $f$ be a rational map of degree larger than 1, $z_0$ a non-exceptional point and $\nu$ the equidistributed
  Bernoullli measure on the space of bitrajectories through $z_0$. Then, for $\nu$-almost all bitrajectories $t$
  passing through $z_0$, we have that $\alpha(t)=J_f$.
\end{theoremX}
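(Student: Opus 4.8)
The plan is to read a bitrajectory through $z_0$ as a backward orbit $t=(z_0,z_{-1},z_{-2},\dots)$ with $f(z_{-k-1})=z_{-k}$, so that the space of such orbits is the path space of the tree of iterated preimages of $z_0$, and the equidistributed Bernoulli measure $\nu$ is the one that, at each node, assigns mass $1/d$ (with multiplicity) to each of the $d=\deg f$ preimages. Under this identification the law of $z_{-n}$ is exactly the normalized preimage-counting measure $\mu_n=d^{-n}\sum_{w\in f^{-n}(z_0)}\delta_w$. The central input is the classical equidistribution theorem of Brolin, Lyubich and Freire--Lopes--Ma\~n\'e: for every non-exceptional $z_0$ one has $\mu_n\to\mu_f$ weakly, where $\mu_f$ is the measure of maximal entropy, and $\operatorname{supp}\mu_f=J_f$. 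I would also record the transfer-operator identity: writing $\cL\phi(w)=d^{-1}\sum_{f(v)=w}\phi(v)$ (sum with multiplicity), the $\nu$-conditional law of the orbit satisfies
\[
\nu\big(z_{-(k+m)}\in B \,\big|\, z_{-k}=w\big)=\cL^m\mathbf{1}_B(w),
\]
and $\cL^m\mathbf{1}_B(w)=\mu_m^{(w)}(B)\to\mu_f(B)$ since every $z_{-k}$, being a preimage of a non-exceptional point, is itself non-exceptional. The goal $\alpha(t)=J_f$ then splits into the two inclusions $\alpha(t)\subseteq J_f$ and $\alpha(t)\supseteq J_f$, to be proved for $\nu$-almost every $t$.

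For the inclusion $\alpha(t)\supseteq J_f$, which carries the real content, fix a countable basis of balls $\{B_j\}$ centered at a dense subset of $J_f$, each chosen to be a $\mu_f$-continuity set, so that $\mu_f(B_j)>0$. It suffices to show that for each fixed $j$ the backward orbit enters $B_j$ infinitely often for $\nu$-a.e.\ $t$, since then $\alpha(t)$ meets every $B_j$ and is therefore dense in $J_f$. I would obtain this from a conditional (L\'evy) Borel--Cantelli argument along a sparse sequence of times $n_1<n_2<\cdots$: by the identity above, $\nu\big(z_{-n_k}\in B_j\mid \cF_{n_{k-1}}\big)=\cL^{\,n_k-n_{k-1}}\mathbf{1}_{B_j}(z_{-n_{k-1}})$, and by equidistribution this conditional probability exceeds $\tfrac12\mu_f(B_j)>0$ once the gap $n_k-n_{k-1}$ is large, provided $z_{-n_{k-1}}$ is not too close to the exceptional set. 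Hence the conditional probabilities sum to $+\infty$ almost surely, and the conditional second Borel--Cantelli lemma yields $z_{-n_k}\in B_j$ infinitely often $\nu$-a.s.

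For the complementary inclusion $\alpha(t)\subseteq J_f$ I would show that $\nu$-a.e.\ backward orbit escapes every compact subset $K$ of the Fatou set, i.e.\ visits $K$ only finitely often, so that $\mathrm{dist}(z_{-n},J_f)\to0$. Here $\mu_f(K)=0$ gives $\cL^m\mathbf{1}_K\to0$, and the same conditional machinery (now bounding the return probabilities above rather than below) shows that the orbit visits $K$ only finitely often for $\nu$-a.e.\ $t$; the lone backward orbits that stay trapped in a rotation domain form a $\nu$-null set, because staying in a Siegel disk or a Herman ring forces choosing a distinguished preimage at each step, an event of probability $\prod d^{-1}=0$. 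Combining the two inclusions over the countable family $\{B_j\}$, for $\nu$-a.e.\ $t$ the set $\alpha(t)$ is a closed subset of $J_f$ that is dense in $J_f$, whence $\alpha(t)=J_f$.

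The main obstacle I anticipate is exactly the step where equidistribution must be turned into an almost-sure statement: the weak convergence $\mu_n\to\mu_f$ controls only the one-step (first-moment) distribution of $z_{-n}$, whereas the events ``$z_{-n}\in B_j$'' at different times are strongly dependent. The conditional Borel--Cantelli lemma is designed to bypass independence, but it requires the lower bound $\cL^m\mathbf{1}_{B_j}\ge\tfrac12\mu_f(B_j)$ to hold uniformly over the basepoints $w=z_{-n_{k-1}}$ actually visited by the orbit, and this uniformity fails near the at most two exceptional points, where $\cL^m\mathbf{1}_{B_j}$ degenerates. The crux of a rigorous proof is therefore to show that, $\nu$-almost surely, the backward orbit spends a negligible amount of time in shrinking neighborhoods of the exceptional set---equivalently, that the non-uniformity of Brolin--Lyubich convergence is confined to a set the orbit avoids---so that the conditional lower bounds may legitimately be applied along the chosen times.
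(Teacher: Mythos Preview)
The paper does not contain a proof of this statement. Theorem~\ref{thm:saC} is quoted from Hawkins and Taylor~\cite{HT03} as a known external result, alongside the other cited theorems on $\omega$- and $\alpha$-limits (Sullivan, Fatou, de Melo--van Strien, etc.); it is used only as motivation and background in the discussion of $s\alpha$-limits, and no argument for it is given anywhere in the article. There is therefore nothing in the paper to compare your proposal against.

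As for the proposal itself: the strategy you outline---identifying the equidistributed Bernoulli measure with the path-space measure on the preimage tree, invoking Brolin--Lyubich--Freire--Lopes--Ma\~n\'e equidistribution $\mu_n\to\mu_f$, and upgrading this first-moment statement to an almost-sure one via a conditional Borel--Cantelli argument---is indeed the natural line of attack and is in the spirit of the original proof. You are also right to flag the uniformity issue near exceptional points as the genuine technical crux; that is precisely where the work lies, and your sketch does not yet resolve it. But since the paper under review neither proves nor claims to prove this theorem, an assessment of whether your sketch can be completed belongs to a reading of~\cite{HT03}, not of this article.
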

Only recently several authors started a
thorough investigation of its general properties, in particular F.~Balibrea, J.~Guirao and M.~Lampart~\cite{BGL13},
J.~Mitchell~\cite{Mit20}, J.~Hant{\'a}kov{\'a} and S.~Roth~\cite{HR20}, S.~Kolyada, M.~Misiurewicz and L.~Snoha~\cite{KMS20}.
In particular, the last three authors formulated the following conjecture:
\begin{conjecture*}[\cite{KMS20}]
  For all continuous maps of the interval, all $s\alpha$-limit sets of points are closed.
\end{conjecture*}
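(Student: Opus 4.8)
The plan is to prove the statement for an arbitrary continuous map $f:I\to I$ of a compact interval, so that none of the special structure of S-unimodal maps (finitely many nodes, piecewise monotonicity, the tower of~\cite{DLY20}) is available; only connectedness and the intermediate value property of $f$ may be invoked. First I would rewrite $s\alpha_f(x)$ in the cleanest dynamical form. By the Proposition above, $y\in s\alpha_f(x)$ exactly when $y\in\alpha(t)$ for some bitrajectory $t$ through $x$; concatenating the blocks $f^{k_n}(y_n)=y_{n-1}$ of the defining sequence shows that this is the same as saying that $y$ is an accumulation point of a single infinite backward orbit $x=b_0,b_1,b_2,\dots$ of $f$ (with $f(b_{i+1})=b_i$). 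Thus $s\alpha_f(x)$ is precisely the set of accumulation points of \emph{individual} backward orbits issuing from $x$.

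Next I would isolate the obstruction to closedness. Let $S$ denote the set of all points lying on some infinite backward orbit from $x$, and define $A'$ to be the set of $y\in I$ such that for every $\epsilon>0$ and every $N$ there exist $k\ge N$ and a point $z\in S$ with $d(z,y)<\epsilon$ and $f^{k}(z)=x$; informally, preimages of $x$ that extend to full backward orbits occur arbitrarily close to $y$ at arbitrarily large times. A routine diagonal check shows $A'$ is closed, and the reformulation of the previous paragraph gives $s\alpha_f(x)\subseteq A'$ at once, whence $\overline{s\alpha_f(x)}\subseteq A'$. Consequently the whole theorem reduces to the single inclusion $A'\subseteq s\alpha_f(x)$: together with $s\alpha_f(x)\subseteq A'$ and the closedness of $A'$ it yields $s\alpha_f(x)=A'$, a closed set. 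In words, one must upgrade ``preimages of $x$ cluster at $y$ at unbounded times, possibly along many different orbits'' into ``one backward orbit from $x$ accumulates at $y$.'' This is exactly the step that fails for general dynamical systems, so it is here that one-dimensionality must enter.

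The crux, and the step I expect to be the main obstacle, is this upgrading. Given $y\in A'$ I would choose preimages $z_n\to y$ with $f^{k_n}(z_n)=x$ and $k_n$ strictly increasing, and attempt to thread the $z_n$ onto one backward orbit. The tool is the covering calculus of interval maps: the continuous image of an interval is an interval, and if $f^{m}(J)\supseteq K$ then some subinterval $J'\subseteq J$ satisfies $f^{m}(J')=K$, so any finite chain of covering relations is realized by a genuine orbit segment. Fixing a nested sequence of neighborhoods $W_1\supseteq W_2\supseteq\cdots$ shrinking to $y$, I would use the relations $f^{k_n}(z_n)=x$ to produce, for each $m$, an onto-pullback carrying $x$ back into $W_m$ at some large time, and then splice these pullbacks into one nested family of subintervals whose intersection selects a single backward orbit from $x$ that re-enters every $W_m$ and hence accumulates at $y$. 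The serious difficulty is that, for a merely continuous $f$ (no piecewise monotonicity may be assumed, since the conjecture concerns \emph{all} continuous interval maps), the branches of $f^{k}$ can be arbitrarily wild: preimage sets may be Cantor-like and the onto-subintervals furnished by the covering lemma need not shrink, so forcing the pulled-back points to converge to $y$ itself, rather than merely to stay in a fixed neighborhood, is the delicate point. One must also dispose separately of the degenerate configurations — $z_n=y$ for infinitely many $n$, $f$ constant on a neighborhood of $y$, and the $z_n$ approaching $y$ from one side only — in each of which the covering construction is replaced by a direct backward-recurrence argument.

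Finally I would record how this specializes. When $f$ is S-unimodal the inclusion $A'\subseteq s\alpha_f(x)$ is immediate from Theorem~\ref{thm:main}, since there $s\alpha_f(x)=\cup_{i=0}^kN_i$ is a finite union of closed nodes (cycles or Cantor sets), so no delicate covering analysis is needed; the content of the general conjecture is precisely that the same closedness survives when the chain-recurrent structure is infinitely complicated and $f$ is only assumed continuous.
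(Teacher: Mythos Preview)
The paper does \emph{not} prove this conjecture. It states the conjecture, and later proves only the special case of S-unimodal maps as a corollary of Theorem~\ref{thm:main}: there $s\alpha_f(x)=\bigcup_{i=0}^k N_i$ is an explicit finite (or, when $p=\infty$, countable) union of closed nodes, and the only work is checking that the closure of a countable union of nodes picks up nothing outside $N_\infty$. Your proposal aims at something strictly stronger than what the paper establishes.

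Your outline has a genuine gap at precisely the step you flag as ``the main obstacle'': upgrading $y\in A'$ to $y\in s\alpha_f(x)$ by threading the approximating preimages $z_n\to y$ onto a \emph{single} backward orbit. You describe the covering-interval machinery you would use, but you do not show how to force the pulled-back points to converge to $y$ rather than merely to stay in a fixed neighborhood; you only note that this is ``the delicate point.'' That is not a proof --- it is an identification of the difficulty. Everything prior to this step (the reformulation via bitrajectories, the definition of $A'$, the easy inclusion $s\alpha_f(x)\subseteq A'$, the closedness of $A'$) is routine; the entire content of the conjecture lives in the step you leave open.

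In fact this gap cannot be filled: the conjecture is now known to be \emph{false}. Hant\'akov\'a has constructed continuous interval maps with non-closed $s\alpha$-limit sets, so the inclusion $A'\subseteq s\alpha_f(x)$ fails in general. The phenomenon is exactly the one you were worried about: preimages of $x$ can cluster at a point $y$ at arbitrarily high times along many different backward branches without any single backward orbit accumulating at $y$. The paper's restriction to S-unimodal maps is therefore not incidental --- it is what makes the statement true --- and the structural input from the tower (the explicit node decomposition of $s\alpha_f(x)$) is essential, not merely convenient.
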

\section{Definitions and graph of S-unimodal maps.}
A {\em discrete dynamical system} on a metric space $(X,d)$  is given by the iterations of a continuous map $f:X\to X$.
In this article we are interested in the case where {\blue $X$ is a closed interval} and $f$ is a S-unimodal map, as defined below:
\begin{defn}[\cite{Sin78}]
  A $C^3$ map $f:[a,b]\to[a,b]$ is {\em S-unimodal} if:
  \begin{enumerate}
  \item either $f(a)=f(b)=a$ or $f(a)=f(b)=b$;
  \item $f$ has a unique critical point $c\in(a,b)$;
  \item the Schwarzian derivative of $f$
    $$
    Sf(x) = \frac{f'''(x)}{f'(x)}-\frac{3}{2}\left(\frac{f''(x)}{f'(x)}\right)^2
    $$
    is negative for all $x\in[a,b]$, $x\neq c$.
  \end{enumerate}
\end{defn}
In all statements and examples throughout the article, we will assume that $c$ is a maximum.
Of course the same proofs hold also when $c$ is a minimum after trivial modifications
that we leave to the reader.

Given any S-unimodal map $f$, the equation $f(x)=y$ has two distinct solutions for each $y<f(c)$.
For each $p\neq c$ in the domain of $f$, we denote by ${\hat p}$ the solution different from $p$
of the equation $f(x)=f(p)$.
\begin{esempio}
  In case of the logistic map $$\ell_\mu(x)=\mu x(1-x)$$ we have that $[a,b]=[0,1]$, $f(0)=f(1)=0$,
  $c=0.5$ and $\hat p = 1-p$. The Schwarzian derivative of $\ell_\mu$ is trivially negative since
  $\ell'''_\mu\equiv0$.
\end{esempio}
\blue
Recall that, as a consequence of Guckenheimer's results in~\cite{Guc79}, each S-unimodal map
is topologically conjugated to a logistic map (see Thm.~6.4 and Remark~1 in~\cite{dMvS93}).
In particular, each S-unimodal map has at most two fixed points.
\black

The following several definitions are needed to define the graph of a dynamical system $f$.
%
\begin{defn}[\cite{Bow75}]
  An {\em $\epsilon$-chain} from $x\in X$ to $y\in X$ is a sequence of points $x_0=x,x_1,\dots,x_n,x_{n+1}=y$
  such that $d(f(x_i),x_{i+1})<\epsilon$ for all $i=0,\dots,n$. We say that $x$ is {\bf downstream} (resp. {\bf upstream}
  from $y$) if, for every $\epsilon>0$, there is an $\epsilon$-chain from $y$ to $x$ (resp. from $x$ to $y$).
  A point $x\in X$ is {\bf chain-recurrent} if it is downstream from itself.
\end{defn}
\subsection{Nodes.} We denote the set of all chain-recurrent points of $X$ under $f$ by $\cR_f$. The relation $x\sim y$ iff $x$ is both upstream
and downstream from $y$ is an equivalent relation in $\cR_f$ (e.g. see~\cite{Nor95b}). The points of $\cR_f/\sim$
will be the nodes of the graph and so we refer to them as {\bf nodes}.
Notice that every node is closed and invariant under $f$~\cite{Nor95b}.
The edges of the graph will be defined through the asymptotics of the system's bitrajectories, as explained below.
%
%
%

%
\begin{prop}[\cite{Nor95b}]
  Given any bitrajectory $t$, there exist nodes $N_1,N_2$ in $\cR_f$, not necessarily distinct, such that $\alpha(t)\subset N_1$ and
  $\omega(t)\subset N_2$.
\end{prop}
\begin{proof}\
  If $x,y\in\omega(t)$ it means that, for every $\epsilon>0$, there are integers $p,q,r$ such that $|x_i-x|<\epsilon$ for $i=p,r$
  and $|x_q-y|<\epsilon$, namely $x$ is both upstream and downstream from $y$ and so they belong to the same node.
  The same argument can be repeated in case of the $\alpha$-limit.
\end{proof}
Notice that, when $\alpha(t)$ and $\omega(t)$ belong to the same node $N$, it means that all points of $t$ actually belong to $N$.
\begin{defn}[\cite{Mil85}]
   A closed invariant set $A$ is an {\bf attractor} if it satisfies the following conditions:
    \begin{enumerate}
        \item the basin of attraction of $A$, namely the set of all $x\in X$ such that $\omega(x)\subset A$, has strictly positive measure;
        \item there is no strictly smaller invariant closed subset $A'\subset A$ whose basin differs from the basin of $A$ by just a zero-measure set.
    \end{enumerate}
\end{defn}
\begin{defn}
  We call a node an {\bf attracting node} if it contains an attractor, otherwise we call it a {\bf repelling node}.
\end{defn}
%
%
%
\blue
\begin{defn}
  We say that a cycle $\gamma$ is {\bf critical}, {\bf regular} or {\bf flip} if, given any $p\in\gamma$, 
  respectively $(f^n)'(p)=0$, $(f^n)'(p)>0$ or $(f^n)'(p)<0$.
\end{defn}
Notice that the definition is consistent because the derivative of $f^n$ is constant on any $n$-cycle.
Recall also that this derivative is zero, and so the cycle is superattracting, if and only if $c$ belongs to the cycle.
\black
\begin{defn}
  We call period-$k$ {\bf trapping region} of $f$ a collection $\cT$ of $k$ closed intervals 
  $J_1,\dots,J_k$ such that:
  \begin{enumerate}
  \item $c$ lies in the interior of $J_1$;
  \item the interiors of the $J_i$ are pairwise disjoint;
  \item $f(J_i)\subset J_{i+1}$, $i=1,\dots,k$, where we use the notation $J_{k+1}=J_1$.
  \end{enumerate}
  We say that $\cT$ is {\bf cyclic} if, moreover,
  \begin{enumerate}
    \setcounter{enumi}{3}
  \item $\partial J_1=\{p_1,\hat p_1\}$ for some periodic point $p_1$;
  \item $f(\partial J_i)\subset\partial J_{i+1}$.
  \end{enumerate}
  We denote by $|\cT|$ the period of $\cT$.
  When $\cT$ is cyclic, we denote by $\Gamma(\cT)$ the periodic orbit passing through $p_1$.
  \blue
  We say that $\Gamma(\cT)$ is the {\bf minimal} cycle of $\cT$ (see Thm.~\ref{thm:RepellingNodes}). In general, we say that a cycle
  $\gamma$ is minimal when $\gamma=\Gamma(\cT)$ for some trapping region $\cT$.
  \black
  Sometimes, to emphasize, we denote by $J_i(\cT)$ the $J_i$ interval of $\cT$.  
  We denote by
  $\Jall=\Jall(\cT)$ the union of the interiors of all the $J_i$.
  We say that a cyclic trapping region is a {\bf flip trapping region} if any two $J_i$ have an endpoint in common,
  otherwise we say it is a {\bf regular trapping region}.
\end{defn}
\blue
\begin{prop}
  A cyclic trapping region $\cT$ is flip (resp. regular) iff $\Gamma(\cT)$ is flip (resp. regular).
  Moreover, $|\cT|=|\Gamma(\cT)|$ if $\cT$ is regular while $|\cT|=2|\Gamma(\cT)|$ if $\cT$ is flip.
\end{prop}
\black
\begin{figure}
  \centering
  \includegraphics[width=15cm]{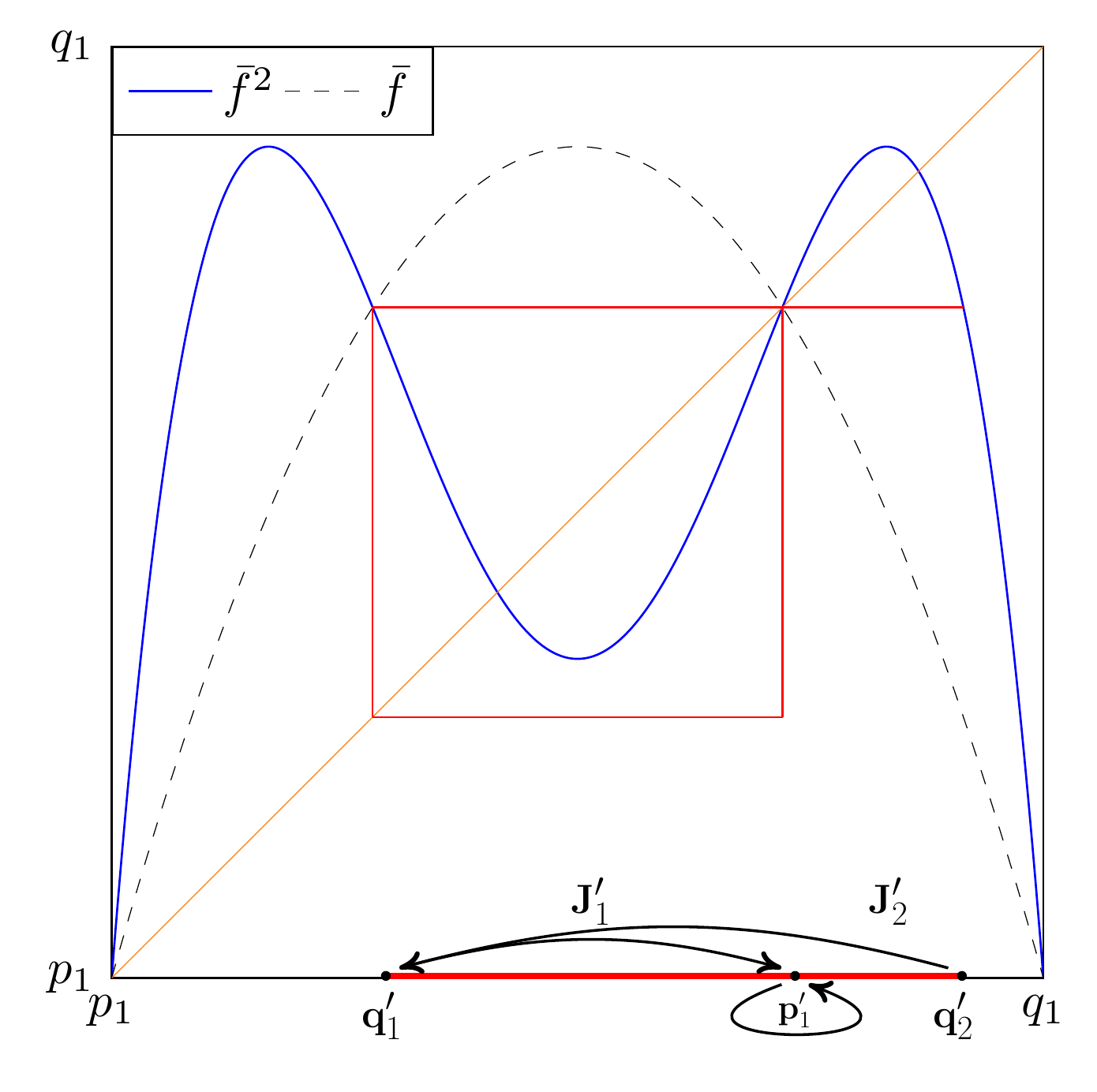}
  \caption{{\bf Example of flip trapping region.} The interval $[p_1,q_1]$, where possibly $q_1<p_1$,
    is the $J_1$ interval of some cyclic trapping region of some period $r$ of a map $f$, so that $J_1$
    is invariant under $\bar f=f^k$. Inside $J_1$ is shown a period-2 flip trapping region $\{J'_1,J'_2\}$
    of $\bar f$. The point $p'_1$, which is fixed by $\bar f$, is an endpoint of both $J'_{1,2}$.
    Both the other endpoints $q'_1,q'_2$ eventually fall on $p'_1$. Notice that $\bar f(c)<q'_2$.
  }
  \label{fig:P1}
\end{figure}
\begin{figure}
  \centering
  \includegraphics[width=15cm]{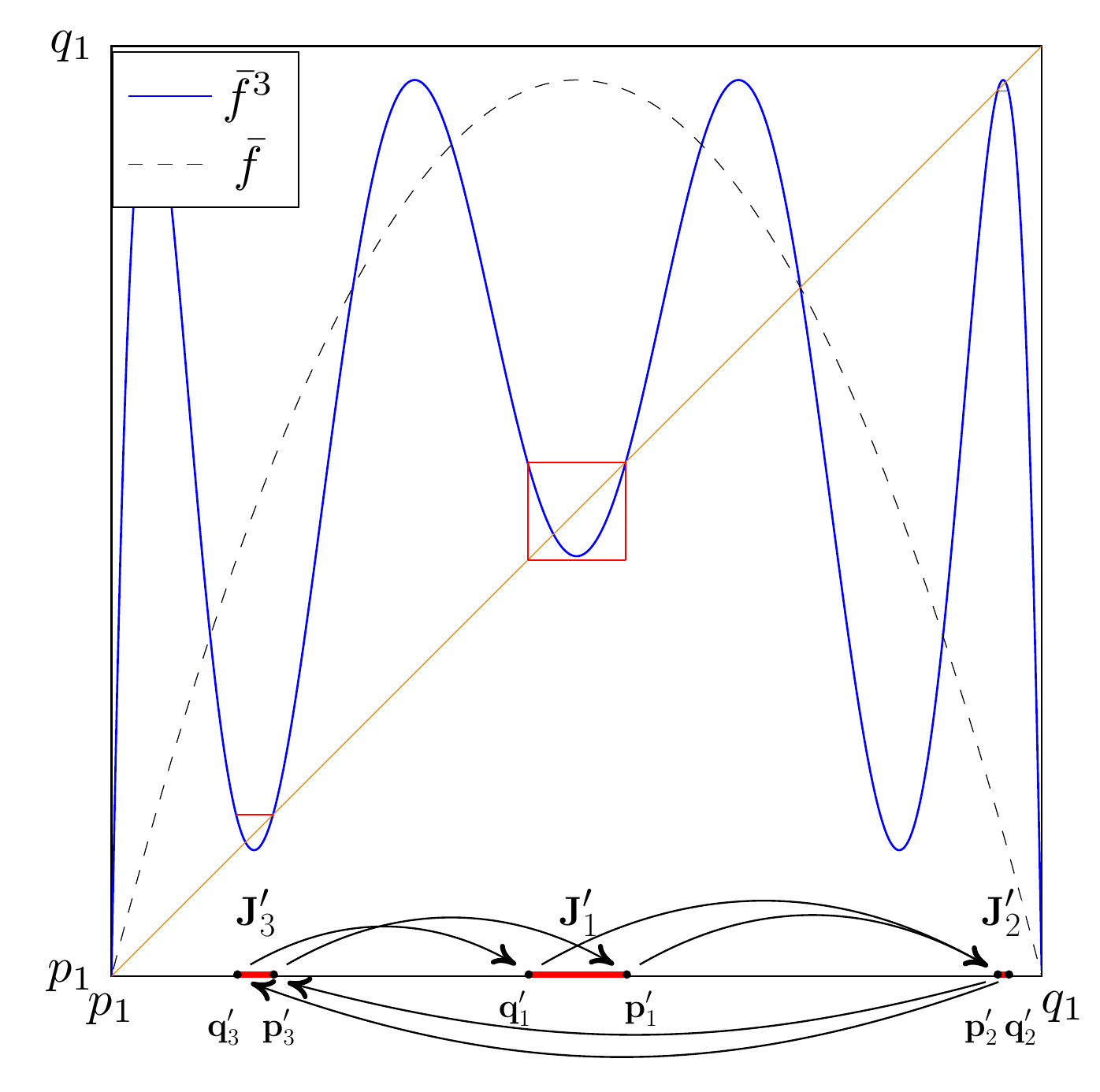}
  \caption{{\bf Example of regular trapping region.} The interval $[p_1,q_1]$, where possibly $q_1<p_1$,
    is the $J_1$ interval of some cyclic trapping region of some period $r$ of a map $f$, so that $J_1$
    is invariant under $\bar f=f^k$. Inside $J_1$ is shown a period-3 regular trapping region $\{J'_1,J'_2,J'_3\}$
    of $\bar f$. The arrows show how the endpoints map under $\bar f$: the endpoints $p'_1,p'_2,p'_3$
    form an unstable 3-cycle of $\bar f$ and all other endpoints $q'_1,q'_2,q'_3$
    eventually fall into this cycle.
  }
  \label{fig:P2}
\end{figure}
%
 %
\begin{esempio}
  The interval $[a,b]$ is, trivially, a period-1 regular cyclic trapping region for every S-unimodal map.
\end{esempio}
\begin{esempio}
  Let $\bar p_\mu$ be the fixed point of the logistic map $\ell_\mu$ other than $0$.
  The collection
  $$
  \cT_\mu=\{J_1=[\ell^2_\mu(c),\bar p_\mu],J_2=[\bar p_\mu,\ell_\mu(c)]\}
  $$
  is a period-2 trapping region for all $\mu\in(1+\sqrt{5},4]$.
  A direct calculation shows that $\cT_\mu$ is {\em cyclic} only at
  $$
    \mu_M=3[1+(19-3\sqrt{33})^{1/3}+(19+3\sqrt{33})^{1/3}]/2\simeq3.67857,
  $$
  which is exactly the point at which $\bar p_\mu$ plunges into a chaotic attractor and ceases to be a node in itself.
  At that point, the logistic map has a {\em crisis}~\cite{GOY82}.
\end{esempio}
This example above can be generalized to S-unimodal maps, as shown below.
\begin{defn}
  \blue
  We call {\bf core} of a S-unimodal map $f$ the interval $[f^2(c),f(c)]$.
\end{defn}
In order to simplify the notation, we often use the notation $c_k$ in place of $f^k(c)$.
\blue
\begin{prop}
  \label{prop:[c2,c1]}
  Let $f$ be a S-unimodal map and $\bar p$ its internal fixed point.
  The core of $f$ is forward-invariant if and only if $\bar p>c$.
\end{prop}
\begin{proof}\
  Notice first of all that, when $c_3<c_2<c$, we have that the restriction of $f$ to $[c_3,c_2]$ is a
  orientation-preserving homeomorphism and so $f([c_3,c_2]) = [c_4,c_3]$. Hence, in general,
  $f([c_k,c_{k-1}]) = [c_{k+1},c_k]$, namely the forward orbit of $c$ is a monotonically decreasing
  sequence contained in $[a,c)$ and therefore it must converge to a fixed point at the left of $c$.
  In particular, when $c<\bar p$, this cannot happen because, when $f$ has two fixed points,
  the boundary one $x=a$ is repelling (recall that $f$, being topologically conjugate to a logistic map,
  cannot have more than two fixed points).
  Now let us go over all possible cases:

  \vspace{2mm}\noindent
  {\bf Case 1: $\bm{\bar p\leq c}$}. Since $f$ is monotonically increasing at the left
  of $c$ and monotonically decreasing at its right, this can happen if and only if the graph of $f$
  is not above the diagonal at $x=c$, namely we must have $c_1\leq c$.
  The case $c_1=c$ is degenerate: in that case $c=\bar p$ is fixed and $[c_2,c_1]$ is a single point.
  When $c_1<c$, the restriction of $f$ to $[c_1,c]$ is a
  orientation-preserving homeomorphism and we fall again in the case outlined above:
  $f([c_1,c])=[c_2,c_1]$ and $f([c_2,c_1])=[c_3,c_2]$, so in particular $[c_2,c_1]$ is not forward
  invariant. 

  \vspace{2mm}\noindent
  {\bf Case 2: $\bm{c<\bar p}$, $\bm{c\leq c_2}$.}  In this case, the restriction of $f$ to $[c_2,c_1]$ is a
  orientation-reversing homeomorphism and so $f([c_2,c_1])=[c_2,c_3]\subset[c_2,c_1]$,
  showing that $c_2<c_3$ and that $[c_2,c_1]$ is forward-invariant. It is not a trapping region,
  though, since $c\not\in(c_2,c_1)$.

   \vspace{2mm}\noindent
  {\bf Case 3: $\bm{c_2<c<\bar p}$.} 
  In this case, we have that
  $$
  f([c_2,c_1])=f([c_2,c]\cup[c,c_1])=[c_3,c_1]\cup[c_2,c_1]=[c_2,c_1]
  $$
  since, by our argument above, the configuration $c_3<c_2<c<\bar p$ cannot take place
  and therefore we must have $c_2\leq c_3$.
  Hence in this case $[c_2,c_1]$ is a period-1 trapping region.
\end{proof}
\black

  Several examples of cyclic flipping and regular trapping regions are shown, respectively, in Figs.~\ref{fig:P1},\ref{fig:T1}
  and~\ref{fig:P2},\ref{fig:T2}.
%
\begin{theoremX}[{\bf Repelling Nodes}~\cite{DLY20}]
  \label{thm:RepellingNodes}
  Let $f$ be a S-unimodal map. Then the minimum distance between a repelling node $N$
  and the critical point of $f$ is achieved at a periodic point $p_1$ of $N$. The closed interval
  with endpoints $p_1$ and $\hat p_1$ is the $J_1$ interval of a cyclic trapping region $\cT(N)$
  such that:
  \begin{enumerate}
  \item $N\cap\Jall(\cT(N))=\emptyset$;
  \item $p_1\in \Gamma(\cT(N))$ and each interval of  $\cT(N)$ has a point of $\Gamma(\cT(N))$ at its boundary;
  \end{enumerate}
  Depending on the type of the corresponding trapping region, there are two types of repelling nodes:
  \begin{enumerate}[label=(\subscript{R}{{\arabic*}})]
  \item A flip cycle. In this case, $\cT(N)$ is a flip trapping region.
  \item A Cantor set with a dense orbit. In this case, $\cT(N)$ is a regular trapping region.
  \end{enumerate}
\end{theoremX}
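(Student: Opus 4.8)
The plan is to single out the point of $N$ closest to $c$, prove it is periodic, and then grow a cyclic trapping region from it whose combinatorial type encodes the way the first-return map folds near that point.

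First I would record that $N$, being closed, is compact, and that $c\notin N$: if $c$ were chain-recurrent its forward orbit would accumulate on an attractor sitting inside the node, contradicting that $N$ is repelling. Hence $\delta:=d(N,c)>0$ is attained at some $p_1\in N$, and after relabeling I may assume $p_1=c-\delta$. Minimality immediately gives $N\cap(c-\delta,c+\delta)=\emptyset$, since any point of $N$ strictly nearer to $c$ than $p_1$ would contradict the choice of $\delta$; thus $c$ lies in the interior of the central gap $G$ of $N$ and $p_1$ is one of its endpoints.

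The crux is that $p_1$ is periodic. I would obtain this from the absence of wandering intervals for $C^3$ unimodal maps with negative Schwarzian derivative (de Melo--van Strien): the forward images of $\overline{G}$ cannot be pairwise disjoint forever, and since both endpoints of $G$ lie in the invariant set $N$ (so $G$ is not contained in the basin of the attractor), the itinerary of $G$ among the gaps of $N$ must be eventually periodic. Because $G$ is distinguished as the unique gap that contains $c$, this return is genuine, and---using that away from $c$ and from the attractor $f$ is expanding, so that $N$ is a hyperbolic repeller (Ma\~n\'e, Misiurewicz)---the boundary of $G$ is carried to itself, making $p_1$ periodic of some period $m$, with cycle $\gamma\ni p_1$. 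I expect this to be the main obstacle, because the same step must also control the fold-partner $\hat p_1$: one has to show $|p_1-c|\le|\hat p_1-c|$, i.e.\ that $\hat p_1$ does not protrude past the far endpoint of $G$, so that $J_1:=[p_1,\hat p_1]\subseteq\overline{G}$ and no point of $N$ slips into its interior. It is precisely here that the lack of symmetry of $f$ about $c$ has to be tamed.

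Granting this, I would set $J_1=[p_1,\hat p_1]$, so $c\in\mathrm{int}\,J_1$ and $\mathrm{int}\,J_1\cap N=\emptyset$, and define the remaining intervals as the smallest closed intervals containing $f(J_1),f^2(J_1),\dots$; since $f(\gamma)=\gamma$ and $p_1$ has period $m$, these close up into a cyclic family $\cT(N)$ with $f(J_i)\subseteq J_{i+1}$ and every $\partial J_i$ meeting $\gamma$, which is condition (2). For $N\cap\Jall(\cT(N))=\emptyset$ I would argue that each interior point of $\Jall$ is absorbed into the attracting node---by the trapping property together with Singer's theorem---whereas every point of the invariant set $N$ has its $\omega$-limit inside $N$ and so is not attracted; hence $N$ misses the interiors. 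Finally the dichotomy is read off from the multiplier $\lambda=(f^m)'(p_1)$, which satisfies $|\lambda|>1$. If $\lambda<-1$ the return $f^m$ reverses orientation at $p_1$, so $p_1$ and $\hat p_1$ form a period-$2$ flip pair for $f^m$ as in Fig.~\ref{fig:P1} and $\cT(N)$ closes up only after $2m$ steps with consecutive intervals sharing cycle points; no chain-recurrent point survives outside $\gamma$, so $N=\gamma$ is a flip cycle. If $\lambda>1$ the return preserves orientation and expands, $J_1$ spreads across $m$ pairwise disjoint intervals (Fig.~\ref{fig:P2}), giving a regular trapping region of period exactly $m$, and the points that never enter the trap form a transitive Cantor repeller, which is the node $N\supsetneq\gamma$. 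In either case $p_1\in\gamma\subseteq N$ is periodic and realizes $d(N,c)$, as claimed.
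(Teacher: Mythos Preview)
This theorem is not proved in the present paper: it is stated as a background result with attribution to De~Leo and Yorke~\cite{DLY20}, and no argument for it appears here. There is therefore no proof in this article against which your attempt can be compared; a genuine comparison would require consulting the original source~\cite{DLY20}.

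That said, a few remarks on your outline. The overall strategy---locate the point $p_1\in N$ nearest to $c$, show it is periodic, and build the cyclic trapping region from the orbit of $[p_1,\hat p_1]$---is the natural one and matches what the Chain-Recurrent Spectral Theorem (also quoted here from~\cite{DLY20}) suggests. However, several steps in your sketch are underspecified. Your periodicity argument for $p_1$ conflates the gap $G$ of $N$ containing $c$ with the interval $[p_1,\hat p_1]$; you yourself flag that controlling $\hat p_1$ (ensuring it does not overshoot the far endpoint of $G$) is the delicate point, and nothing in your sketch actually resolves it. Defining the remaining $J_i$ as ``the smallest closed intervals containing $f^i(J_1)$'' does not by itself guarantee pairwise disjoint interiors, which is part of the definition of a trapping region. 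Finally, your argument for $N\cap\Jall(\cT(N))=\emptyset$ assumes every interior point of the trap is absorbed by the attractor, but that need not hold for points that eventually land on a lower repelling node; what is actually needed is that no chain-recurrent point equivalent to $p_1$ lies in $\Jall$. These are real gaps rather than mere omissions of routine detail.
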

\begin{figure}
 \centering
 \includegraphics[width=15.5cm]{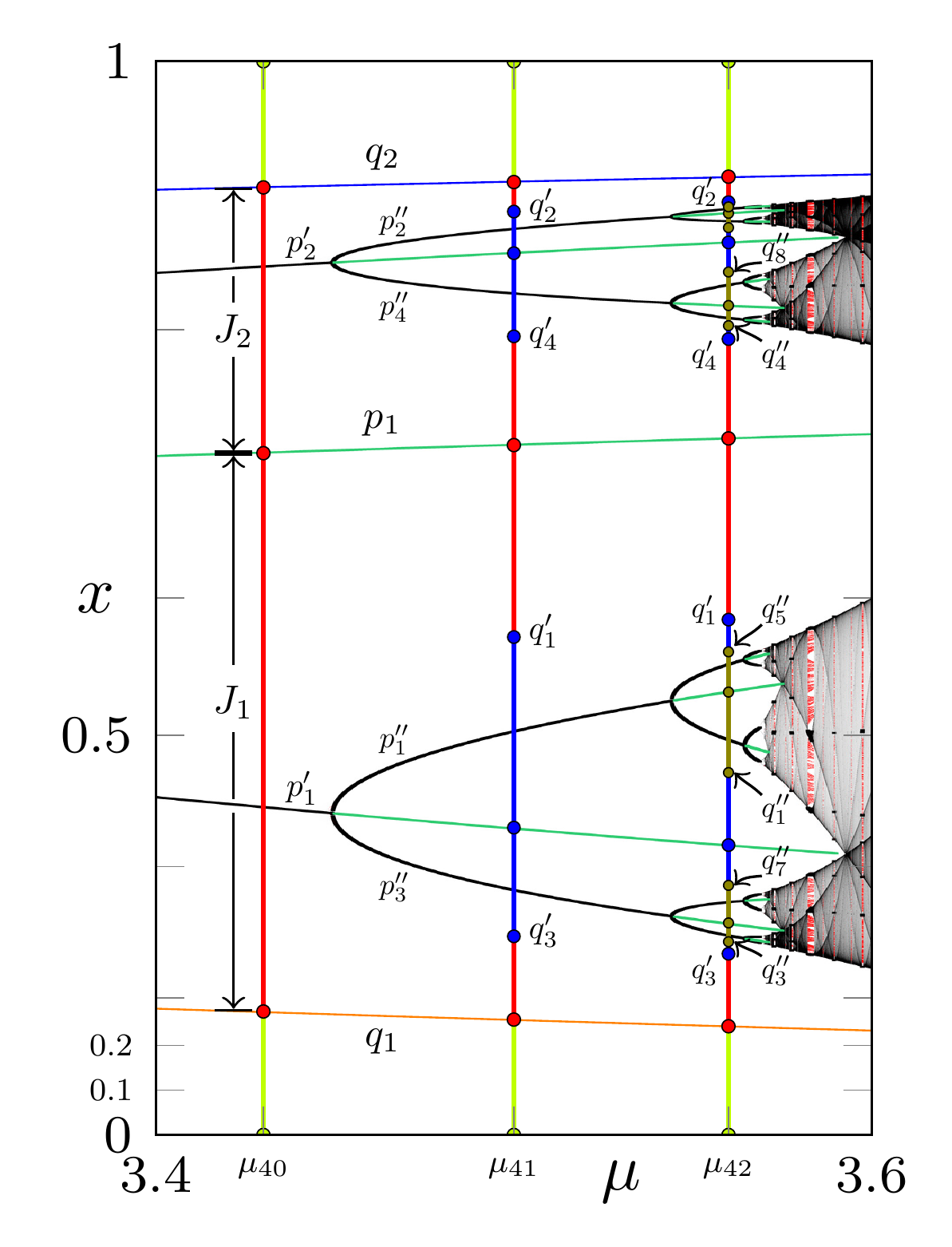}
 \caption{{\bf Examples of flipping trapping regions.} This picture is discussed in detail in Sec.~3.4.
  }
 \label{fig:T1}
\end{figure}
\blue
In order to keep notation light, we often write $\Gamma(N)$ to denote $\Gamma(\cT(N))$ and so on.
\black
\begin{rmk}
  A flip cycle node $N$ is always inside the interior of the union of all $J_i(N)$  (see Fig.~\ref{fig:T1}).
  On the contrary, a Cantor set node $M$ has no point in common with  the interior of the union of all $J_i(M)$
  (see Fig.~\ref{fig:T2}).
\end{rmk}
\begin{theoremX}[{\bf Attracting Nodes}~\cite{DLY20}]
  \label{thm:AttractingNodes}
  Let $f$ be a S-unimodal map. Then its attracting nodes are of the following types:
  
  \begin{enumerate}[label=(\subscript{A}{{\arabic*}})]
  \item An attracting cycle.
  \item An attracting trapping region.
  \item An attracting minimal Cantor set (this is the case when there are infinitely many nodes).
  \item A repelling Cantor set containing a 1-sided attracting periodic orbit.
  \item A trapping region which strictly contains an attracting cyclic trapping region, a repelling Cantor set and part of the basin of attraction.
    The Cantor set and the attractor have a 1-sided attracting periodic orbit in common.
  \end{enumerate}
\end{theoremX}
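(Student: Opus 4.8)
The plan is to start from the classical trichotomy for $\omega$-limit sets of S-unimodal maps (Theorem~\ref{thm:oR}) and refine it into a classification of the \emph{node} containing the attractor. By Singer's theorem the negative Schwarzian derivative forces the immediate basin of any attracting cycle to contain the unique critical point $c$, so there is essentially a single attractor and its nature is governed by $\omega_f(c)$. The attracting node is the chain-recurrent equivalence class containing this attractor, so I would determine it by asking which further chain-recurrent points are mutually chain-connected to the attractor, using the trapping-region machinery and the Repelling Nodes Theorem to organize the case analysis.

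First I would split according to the three possibilities for $\omega_f(c)$. If $\omega_f(c)$ is a \emph{cycle} that is two-sidedly (hyperbolically) attracting, then every nearby point is strictly downstream but not upstream from the orbit, so no other chain-recurrent point lies in its class and the node is exactly the cycle, giving case $(A_1)$. If $\omega_f(c)$ is a \emph{finite union of intervals}, it forms a cyclic trapping region on which $f$ is topologically transitive; transitivity makes any two points of the region mutually chain-connected, so the whole region collapses to a single node and we are in case $(A_2)$. If $\omega_f(c)$ is a \emph{minimal Cantor set}, the map is infinitely renormalizable and the associated cascade of cyclic trapping regions produces, via the Repelling Nodes Theorem, infinitely many repelling nodes strictly below it, while the Cantor set, being minimal, is its own node, yielding case $(A_3)$.

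The remaining types $(A_4)$ and $(A_5)$ occur at boundary (bifurcation and crisis) parameters where the attracting piece is not two-sidedly attracting and therefore merges, under chain recurrence, with a surrounding repelling structure. For $(A_4)$ I would treat the edge of a periodic window, where the attracting cycle is only \emph{one-sidedly} attracting (a semistable, saddle-node orbit): on the repelling side the Repelling Nodes Theorem produces a Cantor set $K$ whose defining regular trapping region $\cT$ has this one-sided orbit on its boundary, and the task is to show that $K$ together with the orbit is a single node. For $(A_5)$ I would treat an interior crisis, where a cyclic trapping region strictly larger than the chaotic attractor becomes chain-transitive and therefore absorbs into one node the attracting cyclic trapping region, a repelling Cantor set, and the part of the basin lying between them.

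The hard part will be the chain-recurrence analysis underlying $(A_4)$ and $(A_5)$: proving mutual chain-connectivity between the attracting component and the repelling Cantor set. The contracting (downstream) direction is immediate from the one-sided attraction, so the real work is constructing, for every $\epsilon>0$, an $\epsilon$-chain that leaves the periodic orbit and reaches deep into $K$. Here I would exploit the non-hyperbolicity of the semistable orbit (respectively the crisis collision of the critical orbit with the repellor): because the orbit attracts only on one side, arbitrarily small perturbations push points out along the repelling side, and the density of $\gamma(\cT)$ in $\partial\Jall(\cT)$ lets one relay such perturbations across the Cantor set. Controlling these local constructions rigorously relies on the absence of wandering intervals guaranteed by the negative Schwarzian derivative, which is ultimately what prevents any further, spurious splitting of the node.
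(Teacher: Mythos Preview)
The paper does not contain a proof of this statement. The Attracting Nodes Theorem is stated here as a result quoted from~\cite{DLY20} (De Leo and Yorke, 2020), exactly like the Repelling Nodes Theorem, the Tower Theorem, and the Chain-Recurrent Spectral Theorem that surround it in Section~3. No argument for it is given or sketched anywhere in the present article; the author simply imports the classification as background for the $s\alpha$-limit analysis of Section~\ref{sec:LimitSets}. Hence there is nothing in this paper to compare your proposal against.

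Your outline is a plausible strategy for a proof, and it is organized along the lines one would expect the original argument in~\cite{DLY20} to follow: start from the Guckenheimer/Martens--de~Melo--van~Strien trichotomy for $\omega_f(c)$, identify the chain-recurrent class containing the attractor in each case, and isolate the two degenerate situations ($A_4$ and $A_5$) at window endpoints where a one-sided attracting orbit forces a merger with a repelling Cantor set. You are right that the substantive work lies in the chain-recurrence arguments for $A_4$ and $A_5$, and that the upstream direction (escaping from the semistable orbit into the Cantor set via $\epsilon$-chains) is the delicate one. But to see how those steps are actually executed you would need to consult~\cite{DLY20} itself, not this paper.
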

\begin{rmk}
  Among the attracting and repelling nodes of S-unimodal maps, those of type $A_5$ are the only ones containing
  points that are not non-wandering points.
  In particular, since all points in $\omega(t)$ and $\alpha(t)$ are non-wandering points, this is the only node that cannot be equal 
  to either the $\omega$-limit or the $\alpha$-limit of any bitrajectory.
\end{rmk}
\subsection{Examples of nodes.}\label{sec:bd}
Some of the nodes of the logistic map are shown in Fig.~\ref{fig:bd}.
\begin{figure}
 \centering
 \includegraphics[width=15.5cm]{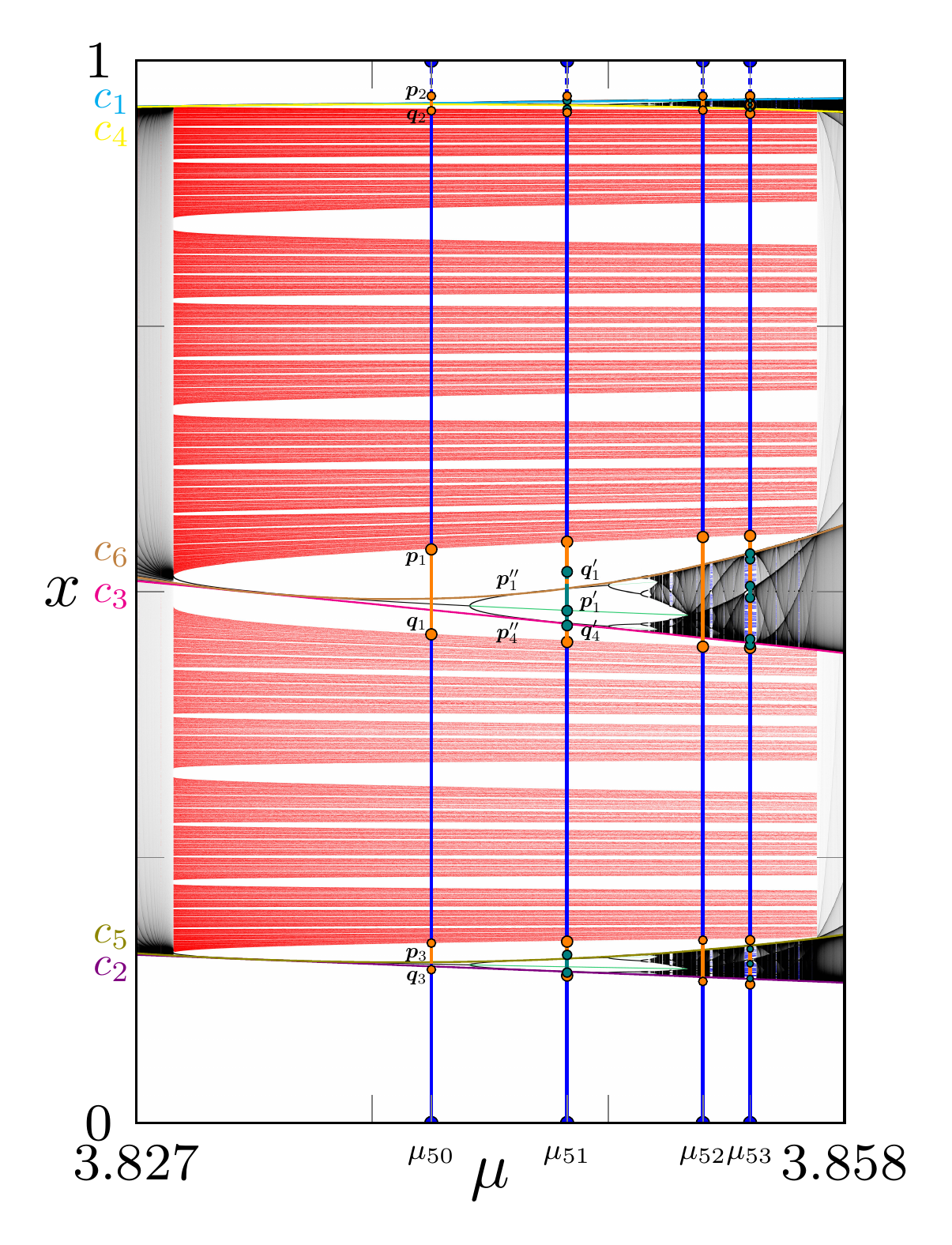}
 \caption{{\bf Examples of regular trapping regions.}  This picture is discussed in detail in Sec.~3.4.
  }
 \label{fig:T2}
\end{figure}

Repelling nodes of type $R_1$ (resp. $R_2$) are painted in green
(resp. red). Each repelling node of either kind exists over some closed connected interval of parameter values and depends
continuously on it. The parameters range over which a repelling Cantor set node $N$ arise is called a {\em window} $W(N)$.
The period of $W(N)$ is the period of the periodic point $p_1(N)$.
The largest window of the logistic map family, shown in detail in Fig.~\ref{fig:T2}, is the period-3 window starting at
$\mu=1+2\sqrt{2}\simeq3.828$ and ending at $\mu\simeq3.857$ (see~\cite{HK03}, p.~299, for an exact expression of this last number).

Attractors are shown in shades of gray. Attractors of type $A_1$ (isolated black lines) and of type $A_2$ are both visible.
Attractors of type $A_4$ and $A_5$ are located, respectively, at the beginning and end of each window~\cite{DLY20}. Attractors of type $A_3$
are not visible because they arise only for a zero-measure set of parameter values~\cite{Lyu02}. The lines visible throughout the chaotic
attractors are iterates of the critical point, about which the density of points under iterations is higher. In colors are highlighted
the first four of these iterates.

\subsection{The Graph.}
Finally we have all ingredients to introduce the graph of a discrete dynamical system:
\begin{defn}[\cite{Con78}]
  We call graph $\Gamma_f$ of the dynamical system $f$ on $X$ the directed graph having as nodes the elements of $\cR_f/\sim$ and having
  an edge from node $N_1$ to node $N_2$ when there exist a bitrajectory $t$ such that $\alpha(t)\subset N_1$ and $\omega(t)\subset N_2$.
\end{defn}
The key result by Conley, obtained originally for continuous systems~\cite{Con78} and generalized later by D.~Norton~\cite{Nor95}
to discrete ones, is that the dynamics outside of nodes is purely of gradient-like nature, as explained below.
\begin{defn}
  A {\bf Lyapunov function}~\cite{WY73} for $f:X\to X$ is a continuous function $L$ such that:
   \begin{enumerate}
       \item $L$ is constant on each node;
       \item $L$ assumes different values on different nodes;
       \item $L(f(x))< L(x)$ if and only if $x$ is not chain-recurrent.
   \end{enumerate}
\end{defn}
\begin{theoremX}[\cite{Nor95}]
    Let $f$ be a dynamical system on a compact metric space $X$. Then there exists a Lyapunov function for $f$. 
    In other words, every point $x\in X$ either belongs to $\cR_f$ or is gradient-like for $f$.
\end{theoremX}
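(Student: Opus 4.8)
The plan is to adapt Conley's construction of a complete Lyapunov function to the discrete, possibly non-invertible, setting, organizing the proof around \textbf{attractor--repeller pairs}. First I would fix the right notion of attractor: starting from a compact \emph{trapping region} $B$ with $f(B)\subset\mathrm{int}\,B$, set $A=\bigcap_{n\ge0}f^n(B)$, with open forward-invariant basin $\mathcal B(A)=\bigcup_{n\ge0}f^{-n}(\mathrm{int}\,B)$ and dual repeller $A^\ast=X\setminus\mathcal B(A)$. The first task is to check that $A$ and $A^\ast$ are closed and invariant and that $A^\ast$ is genuinely repelling, which in the non-invertible case must be phrased purely in terms of the forward dynamics, since backward orbits branch.

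The second step is to build, for a single pair $(A,A^\ast)$, an \emph{elementary} Lyapunov function $g_A\colon X\to[0,1]$ with $g_A^{-1}(0)=A$, $g_A^{-1}(1)=A^\ast$, that is non-increasing along orbits and \emph{strictly} decreasing at every $x\notin A\cup A^\ast$. I would start from the continuous separating function $h(x)=d(x,A)/\bigl(d(x,A)+d(x,A^\ast)\bigr)$, pass to the non-increasing $\bar g(x)=\sup_{n\ge0}h(f^n(x))$, and then smooth and strictify by averaging, $g_A(x)=(1-\lambda)\sum_{n\ge0}\lambda^{\,n}\bar g(f^n(x))$ for a fixed $\lambda\in(0,1)$. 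Verifying that $g_A$ is continuous (the delicate point, since a supremum of continuous functions need only be lower semicontinuous) and that the averaging really upgrades monotonicity to strict monotonicity off $A\cup A^\ast$ is the main technical hurdle of this step.

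The third step is the structural heart of the argument, the discrete Fundamental Theorem: I would prove that the chain-recurrent set is exactly what is left after removing every gradient gap, $\cR_f=\bigcap\{A\cup A^\ast\}$, the intersection ranging over all attractors, and that two chain-recurrent points lie in the same node if and only if no attractor--repeller pair separates them. The nontrivial inclusion uses $\epsilon$-chains to move a point that is \emph{not} captured by a trapping region across the gap between $A^\ast$ and $A$, forcing it out of $\cR_f$; the converse builds, from a point caught strictly between some $A^\ast$ and $A$, an $\epsilon$-chain witnessing non-recurrence. A separate lemma, using a countable basis of the compact metric space $X$ to produce trapping regions, shows that there are only countably many distinct attractors; this countability is what makes the final assembly possible.

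Finally I would assemble the global function. Choosing a countable family $\{A_n\}$ of attractors that separates all pairs of nodes, I set $L=\sum_n c_n\,g_{A_n}$ with strictly positive, fast-decaying coefficients (e.g. $c_n=3^{-n}$, so the induced binary-type encoding of nodes is injective) chosen to guarantee uniform convergence, hence continuity of $L$. Each $g_{A_n}$ restricted to $\cR_f$ takes only the values $0$ and $1$, so $L$ is constant on each node, giving property (1); the separating and injectivity properties give distinct values on distinct nodes, property (2); and for $x\notin\cR_f$ some $A_n$ has $x\notin A_n\cup A_n^\ast$, whence that single term strictly decreases while all others are non-increasing, so $L(f(x))<L(x)$, whereas for $x\in\cR_f$ every term is constant along the orbit, giving the ``only if'' of property (3). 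The main obstacle I anticipate is not the bookkeeping of this last step but the two facts that feed it: continuity of the elementary Lyapunov functions and the attractor--repeller characterization of chain recurrence for a map that need not be a homeomorphism.
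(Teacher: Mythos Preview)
The paper does not prove this theorem; it is quoted as a background result from Norton~\cite{Nor95} (the discrete analogue of Conley's ``Fundamental Theorem of Dynamical Systems'') and no argument is given in the paper. So there is nothing to compare your proposal against within the paper itself.

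That said, your outline is essentially Norton's own argument, which in turn transports Conley's continuous-time construction to maps: attractor--repeller pairs from trapping regions, elementary Lyapunov functions built by the distance-ratio/sup/averaging trick, the identity $\cR_f=\bigcap_A(A\cup A^\ast)$, countability of attractors from second countability of $X$, and the weighted series $L=\sum c_n g_{A_n}$. The two points you flag as delicate --- continuity of $\bar g$ (the sup) and the characterization of $\cR_f$ in the non-invertible setting --- are exactly the places where Norton has to do real work beyond Conley; your identification of them is accurate. One small caution: the choice $c_n=3^{-n}$ gives injectivity on nodes only because each $g_{A_n}$ takes values in $\{0,1\}$ on $\cR_f$, so the resulting node values are ternary expansions using only the digits $0$ and $2$; you should say explicitly that the separating family $\{A_n\}$ must distinguish \emph{every} pair of nodes (not merely be dense among attractors), otherwise property~(2) can fail.
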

Recall that a loop in a graph is a sequence of nodes $N_1,N_2,\dots,N_k$ such that there is an edge from $N_i$ to $N_{i+1}$
for $i=1,\dots,n-1$ and then an edge from $N_k$ back to $N_1$.
\begin{cor}
  A graph $\Gamma_f$ does not have loops.
\end{cor}
\begin{defn}
  A graph $\Gamma_f$ is a tower if there is an edge between each pair of nodes.
\end{defn}
Notice that, because of the corollary above, if there is an edge from $A$ to $B$ then there cannot be an edge from $B$ to $A$.

In~\cite{DLY20} we prove the following two fundamental results. Notice that the second theorem integrates some original
result of ours together with other classic results by other authors (references are provided within the claim).
\begin{theoremX}[{\bf Tower Theorem.}]
  The graph of a S-unimodal map is a tower.
\end{theoremX}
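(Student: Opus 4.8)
The plan is to show that between every ordered pair of distinct nodes there is an edge; by the no-loops Corollary this forces the nodes to be linearly ordered and makes $\Gamma_f$ a tower. First I would organize the nodes into a linear hierarchy using the trapping regions supplied by the Repelling Nodes Theorem: each repelling node $N$ carries a cyclic trapping region $\cT(N)$ whose interval $J_1(\cT(N))=[p_1,\hat p_1]$ is a neighborhood of the critical point $c$, with $p_1$ the point of $N$ nearest to $c$. I would then check that these $J_1$ intervals are totally ordered by inclusion, reflecting the renormalization hierarchy of $f$ and the fact that all nodes but the boundary fixed point $N_0$ lie inside $[c_2,c_1]$. Adjoining the outermost region $J_1=[a,b]$ attached to $N_0$ and placing the attracting node (Attracting Nodes Theorem) innermost yields a labelling $N_0,N_1,\dots,N_p$ with $J_1(\cT(N_i))\supset J_1(\cT(N_{i+1}))$. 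It then remains to produce, for each $i<j$, a bitrajectory $t$ with $\alpha(t)\subset N_i$ and $\omega(t)\subset N_j$.

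I would build these bitrajectories from two elementary moves. The first is the edge from an arbitrary repelling node $N_i$ to the attracting node $N_p$: taking a point of $N_i$ and displacing it slightly into $\Jall(\cT(N_i))$, the condition $f(J_k)\subset J_{k+1}$ keeps the forward orbit trapped, so by the Attracting Nodes Theorem it is absorbed into the attractor and $\omega(t)\subset N_p$; since $N_i$ is repelling it is backward-attracting, so one can choose preimages whose backward orbit accumulates on $N_i$, giving $\alpha(t)\subset N_i$. The second move produces an edge from a higher node $N_i$ to a lower \emph{repelling} node $N_j$ with $j<p$. Here forward convergence cannot be to the whole node, which repels, so I would instead aim the forward orbit at the boundary cycle $\gamma(\cT(N_j))$, which lies in $N_j$ and, by the Repelling Nodes Theorem, meets the boundary of every interval of $\cT(N_j)$: a point on a preimage chain of $\gamma(\cT(N_j))$ has $\omega(t)=\gamma(\cT(N_j))\subset N_j$, and I would steer the backward branch of that chain outward through the nested trapping regions until it accumulates on $N_i$, yielding $\alpha(t)\subset N_i$.

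The main obstacle is the second move, namely realizing an $\omega$-limit inside an intermediate repelling node. Such a node has a measure-zero stable set consisting of the preimages of its boundary cycle, so I must verify both that this set is nonempty and that a backward orbit along it can be made to accumulate on exactly the intended node $N_i$, rather than overshooting to a higher node or falling into a lower one. This is where the precise boundary dynamics of $\cT(N_i)$ and $\cT(N_j)$ — the $\hat p$ construction and the endpoint itineraries recorded for the regular and flip trapping regions — must be exploited, and it forces a case analysis over the repelling types $R_1,R_2$ and the attractor types $A_1$--$A_5$. I expect the flip cycles $R_1$, whose node sits inside $\Jall$, and the crisis configurations where consecutive nodes are on the verge of merging to demand the most care, since there the clean separation between a repelling set and the interior of its trapping region degenerates and the nesting of the $J_1$ intervals can fail to be strict.
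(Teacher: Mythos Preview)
The paper does not prove the Tower Theorem; it is quoted from~\cite{DLY20} and used as a black box, so there is no argument here against which your proposal can be compared. The two structural facts you plan to establish first --- that the trapping regions $\cT(N_i)$ are nested by inclusion, and that every neighborhood of $N_i$ contains points eventually falling into $N_j$ for each $j\ge i$ --- are recorded as points~3 and~8 of the Chain-Recurrent Spectral Theorem, both likewise attributed to~\cite{DLY20} without proof in this paper. Your outline is thus consistent with the architecture the paper imports, and the obstacle you single out (steering a backward branch of a preimage of $\gamma(\cT(N_j))$ so that it accumulates on $N_i$ and not on some other node) is indeed the substantive content that the present paper delegates entirely to the companion reference.

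One small caution on the outline itself: the phrase ``since $N_i$ is repelling it is backward-attracting'' is safe only locally and only along the boundary cycle $\gamma(\cT(N_i))$. For a Cantor node of type $R_2$ the node sits in the complement of $\Jall(\cT(N_i))$, and a generic nearby point inside $\Jall$ need not have any backward branch that escapes the trapping region. What actually makes the argument go through is that the point you perturb can be chosen on (or arbitrarily close to) the boundary cycle, whose local backward dynamics is a contraction; you allude to this when you invoke the endpoint itineraries, but it is worth making explicit that the backward convergence is to $\gamma(\cT(N_i))\subset N_i$ rather than to $N_i$ as a whole.
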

\begin{theoremX}[{\bf Chain-Recurrent Spectral Theorem}]
  Let $f$ be a S-unimodal map with at least a repellor and denote by $N_0, N_1,\dots,N_p$, where $p\geq1$ is possibly infinite,
  the nodes of $\Gamma_f$ sorted in the order determined by the edges (in particular, $N_p$ is the attracting node of $f$). 
  Then:
    \begin{enumerate}
    \item $N_0$ is the fixed endpoint of $f$.
    \item Each repelling node $N_i$, $0\leq i<p$, determines a cyclic trapping region $\cT(N_i)$ for $f$
      such that $\Gamma(\cT(N_i))\subset N_i$ and each $J_k(\cT(N_i))$ has
      a point of $\Gamma(\cT(N_i))$ as an endpoint. There is no other point of $N_i$ in $\cT(N_i)$~\cite{DLY20}.
      \item The trapping regions $\cT(N_i)$ are nested:
      $$                    
      \overline{\Jall(\cT_{j})}\subset\Jall(\cT_{j-1}),\;1\leq j< p,
      $$
      where, by convention, we take $\cT(N_0)=\{[a,b]\}$~\cite{DLY20}.
    \item Each $N_j$, $1\leq j<p$, is either of type $R_1$ or $R_2$~\cite{DLY20}.
      The measure of each repelling node is zero and the action of $f$ on it is topologically conjugated to
      a subshift of finite type~\cite{vS81}.
    \item Each $N_j$, $0\leq j<p$, is repelling and hyperbolic~\cite{vS81}.
    \item $N_p$ is the unique attracting node of $f$~\cite{Guc79} and it is one of the five types $A_1,\dots,A_5$~\cite{DLY20}.
    \item $N_p$ is hyperbolic unless it is of type $A_4$ or $A_5$~\cite{vS81}.
    \item In each neighborhood of $N_i$, for each $j\geq i$, there are points falling eventually into $N_j$~\cite{DLY20}.
    \item When $p=\infty$, the attracting node $N_\infty$ is a Cantor set of zero Lebesgue measure on which $f$ acts as an adding
      machine~\cite{vS81}.
    \end{enumerate}
\end{theoremX}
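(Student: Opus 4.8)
The plan is to assemble the statement from the Tower Theorem, the Repelling and Attracting Nodes Theorems, and the classical results of van Strien \cite{vS81} and Guckenheimer \cite{Guc79}, supplying the connective arguments that tie the abstract graph order to the concrete nesting of trapping regions. First I would invoke the Tower Theorem together with the no-loops Corollary: since $\Gamma_f$ is a tower and acyclic, its nodes carry a total order in which every node has an edge to every later one, so they can be labeled $N_0,\dots,N_p$ with $N_p$ the unique maximal element. That sink is a sink precisely because it has no outgoing edge; by the definition of attractor together with Guckenheimer's uniqueness result \cite{Guc79} it is the single attracting node, which gives item (6), and its five-type classification is supplied verbatim by the Attracting Nodes Theorem. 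The unique minimal element $N_0$ is the fixed endpoint: the endpoint $a$ is a fixed point by the S-unimodal hypothesis, it forms its own chain class, and since $f(x)>x$ for $x$ slightly larger than $a$ there are $\epsilon$-chains (indeed genuine orbits) drifting from $a$ into the interior, so $a$ is upstream from every other node and therefore first, giving item (1).

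Next I would read items (2) and the first half of (4) directly off the Repelling Nodes Theorem: each repelling $N_i$ is of type $R_1$ or $R_2$ and comes with the cyclic trapping region $\cT(N_i)$ whose $J_1$ interval has endpoints $p_1,\hat p_1$, with $\gamma(\cT(N_i))\subset N_i$ meeting the boundary of each $J_k$ and no further point of $N_i$ inside $\Jall(\cT(N_i))$. The remaining dynamical qualifiers are exactly the content of van Strien's structural results \cite{vS81}, which I would cite for S-unimodal maps: negative Schwarzian forces every non-attracting cycle to be hyperbolic repelling and rules out wandering intervals, yielding that each $N_j$ with $j<p$ is hyperbolic, of measure zero, with $f$ conjugate to a subshift of finite type (items (4) and (5)); that $N_p$ is hyperbolic except in the parabolic types $A_4,A_5$ (item (7)); and that in the infinitely renormalizable case $p=\infty$ the attractor is a measure-zero Cantor set on which $f$ acts as an adding machine (item (9)).

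The genuinely new work lies in items (3) and (8), where the abstract graph order must be shown to coincide with geometric nesting. For (3) I would argue from the tower structure: because there is an edge $N_{j-1}\to N_j$, a bitrajectory repelled from $N_{j-1}$ enters $\Jall(\cT(N_{j-1}))$ and, since $f(J_k)\subset J_{k+1}$, stays trapped there forever, so its $\omega$-limit — and hence the deeper node $N_j$ together with its periodic orbit — lies in $\Jall(\cT(N_{j-1}))$. As the periodic point $p_1(N_j)$ realizing the minimal distance of $N_j$ to $c$ lies strictly inside this open region, so do $\hat p_1(N_j)$ and the whole interval $J_1(\cT(N_j))$, yielding $\overline{\Jall(\cT_{j})}\subset\Jall(\cT_{j-1})$ once one checks the boundary periodic points of consecutive nodes are distinct. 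For (8) I would again use that the tower has an edge $N_i\to N_j$ for every $j\geq i$: the witnessing bitrajectory exhibits, arbitrarily close to $N_i$, points whose forward orbit asymptotes to $N_j$, and a short openness argument near the trapping regions upgrades ``asymptotes to'' to ``falls eventually into.''

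The hard part will be item (3): establishing the \emph{closed, strict} nesting rather than mere inclusion, since this requires ruling out that boundary periodic points of $N_{j-1}$ and $N_j$ collide and controlling the position of $p_1(N_j)$ relative to $\Jall(\cT(N_{j-1}))$ uniformly down a possibly infinite tower. I expect this to rest on the kneading combinatorics together with van Strien's hyperbolicity and mutual disjointness of distinct repelling cycles \cite{vS81}, which forbid such collisions.
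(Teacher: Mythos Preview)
Your approach matches the paper's, but note that the paper gives \emph{no proof} of this theorem at all: it is presented as a compilation, with each numbered item carrying an in-line citation to the source where that claim is established (\cite{DLY20}, \cite{vS81}, or \cite{Guc79}). The paper explicitly says just before stating it that ``the second theorem integrates some original result of ours together with other classic results by other authors (references are provided within the claim).'' So there is nothing to compare against beyond the attribution list.

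Your plan to assemble the statement from the Tower Theorem, the Repelling and Attracting Nodes Theorems, and the classical results of van Strien and Guckenheimer is exactly the decomposition the paper's citations indicate. The connective arguments you supply for items (1), (3), and (8) go beyond what this paper provides, since it simply defers those to \cite{DLY20}; your sketches for them are reasonable but, as you correctly flag, the strict closed nesting in (3) is the delicate point and is proved in full only in the companion paper. In short: your proposal is consistent with how the result is assembled, and is in fact more detailed than what appears here.
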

\subsection{Examples of trapping regions.}
\label{sec:ex1}
Several cyclic trapping regions are shown in Figs.~\ref{fig:T1} and~\ref{fig:T2} in case of the logistic map. In all cases,
the top node is $N_0=0$ and its corresponding trapping region is the interval $J_1(N_0)=[0,1]$. 

In Fig.~\ref{fig:T1} it is shown the logistic map's bifurcation diagram for $\mu\in[3.4,3.6]$.
Attracting cycles are shown in black. In this range, most of the figure is occupied by the first bifurcation cascade of the diagram.
Throughout this range, the first node $N_0$ is the unstable fixed point 0 and the second node $N_1$ is the other fixed point $p_1$
(painted in green), which is unstable too. We show respectively in orange and blue the points
$q_1=\hat p_1=1-p_1$ and $q_2$, where $q_2$ is the root of $\ell_\mu^2(x)=p_1$ on the other side of $q_1$
with respect to $p_1$ (see Fig.~\ref{fig:P2}). We paint in lime $\cT(N_0)$.


At $\mu_{40}$, the logistic map has a third and final node: 
the attracting 2-cycle $N_2=\{p'_1,p'_2\}$. The flip trapping region $\cT(N_1)$, painted in red,
consists of the following two intervals:
$J_1(N_1)=[q_1,p_1]$ and $J_2(N_1)=[p_1,q_2]$.

Between $\mu_{40}$ and $\mu_{41}$, the 2-cycle $N_2$ bifurcates and becomes repelling and 
a new attracting 4-cycle node $N_3=\{p''_1,\dots,p''_4\}$ arises.
We paint in blue the period-4 flip trapping region $\cT(N_2)$, which consists in the following intervals:
$J_1(N_2)=[p'_1,q'_1]$, $J_2(N_2)=[p'_2,q'_2]$, $J_3(N_2) = [q'_3,p'_1]$ and $J_4(N_2) = [q'_4,p'_2]$.

Similarly, at $\mu_{42}$ node $N_3$ is repelling and we have a new 8-cycle attractor and a new flip
trapping region $\cT(N_3)$ generated by $J_1(N_3)=[q''_1,p''_1]$ consisting in eight intervals,
painted in olive.

Fig.~\ref{fig:T2} shows a detail of the logistic map's period-3 window
(see also Fig.~\ref{fig:U3} for a close-up of its middle cascade).
Besides the nodes, we plot the supplementary lines $c_k$ for $k=1,\dots,6$. 
At the left endpoint of the window, at $\mu_0=1+2\sqrt{2}$, the graph has just two nodes: $N_0$ and an attracting
node $N_1$ of type $A_4$. We paint in blue $\cT(N_0)$.

For all parameter values in the interior of the window, node $N_1$ is the repelling Cantor set
painted in red.
The point $p_1=p_1(N_1)$ belongs to the repelling 3-cycle $p_1,p_2,p_3$ created together with the other
3-cycle $p'_1,p'_2,p'_3$, that is the attracting node for $\mu$ close enough to $\mu_0$ from the right.
The regular trapping region $\cT(N_1)$ of the Cantor node, painted in orange, consists
of the three intervals $J_1(N_1)=[q_1,p_1]$, $J_2(N_1)=[p_2,q_2]$ and $J_3(N_2)=[q_3,p_3]$.
At the right endpoint, the Cantor set has a point in common with the attracting trapping region,
forming an attracting node of type $A_5$.

At $\mu=\mu_{50}$ and $\mu=\mu_{52}$ the graph has just three nodes.
In the first case, the attracting node $N_2$ is the
3-cycle $p'_1,p'_2,p'_3$; in the second case, it is the period-3 trapping region
$[c_4,c_1],[c_2,c_5],[c_3,c_6]$.

At $\mu=\mu_{51}$, the node $N_2$ is repelling and the attracting node $N_3$ is a 6-cycle $\{p''_i\}_{i=1,\dots,6}$.
The flip trapping region $\cT(N_2)$, painted in teal, consists of the six intervals
$J_1(N_2)=[p'_1,q'_1]$, $J_2(N_2)=[p'_2,q'_2]$, $J_3(N_2)=[q'_3,p'_3]$, 
$J_4(N_2)=[q'_4,p'_1]$, $J_5(N_2)=[q'_5,p'_2]$, $J_6(N_2)=[p'_3,q'_6]$.

At $\mu=\mu_{53}$, node $N_2$ is a second Cantor set, painted in blue, namely
this parameter value is within a window inside a window. Point $p_1(N_2)$ has period 9,
so it is a period-9 window within a period-3 window. The attractor $N_3$ in this case
is the period-9 orbit originating from the same bifurcation that created the repelling
period-9 cycle that $p_1(N_2)$ belongs to.
\section{Limit sets of S-unimodal maps.}
\label{sec:LimitSets}
We are now ready to study the $s\alpha$-limit sets of points under $f$.
Throughout the rest of the paper, we will assume that $f$ satisfies the following:

\smallskip
{\bf Assumption (A): $f$ is a S-unimodal map with at least two nodes.}
\smallskip

Under these conditions, the map is not surjective, i.e. $f(c)<b$, and the fixed point $x=a$
is a repelling node, i.e. $f'(a)>1$ \blue (note that $f'(a)>0$ since we are assuming that the critical
point of $f$ is a maximum)\black.
%
\begin{esempio}
  In case of the logistic map $\ell_\mu:[0,1]\to[0,1]$, assumption (A) amounts to $f'(0)>1$ and $f(0.5)<1$.
  These conditions are satisfied if and only if $\mu\in(1,4)$.
\end{esempio}
The nodes of $f$ will be denoted by $N_0,N_1,\dots,N_p$, where $p\geq1$ is possibly infinite,
and sorted in the order determined by the graph's edges (in particular, $N_p$ is the attracting node of $f$).
Recall that $N_0$, under assumption (A), is always equal to the fixed endpoint of $f$.
\blue
\begin{defn}
  For $k=0,\dots,p-1$, we denote by $r_k$ the period of $\cT(N_k)$ and we set
  $$\bar f_i=f^{r_k}|_{J_i(N_k)}:J_i(N_k)\to J_i(N_k),\;i=1,\dots,r_k.$$
  We denote by $K(\bar f_i)$ the core of the map $\bar f_i$ and we call {\bf core of $\mathbf \cT(N_k)$} the collection
  of intervals
  $$
  \cK(N_k) = \{K(\bar f_1),\dots,K(\bar f_{r_k})\}.
  $$
  Finally, we denote by $K(N_k)$ the union of all intervals in $\cK(N_k)$.
\end{defn}
\begin{esempio}
  $\cK(N_0)=\{[c_2,c_1]\}$ since $r_0=1$ is the period of $\cT(N_0)=\{[a,b]\}$.
  If the second fixed point $\bar p$ of $f$ is a repelling node (as in Fig.~\ref{fig:T1}), then $N_1=\{\bar p\}$,
  $r_1=2$ and $\cK(N_1)=\{[c_2,c_4],[c_3,c_1]\}$.
  
  Suppose now that the second node $N_1$ is a Cantor set. Consider for instance the concrete
  case of the logistic map with $\mu=\mu_{50}$ in Fig.~\ref{fig:T2}. Then $\cT(N_1)$ has period
  equal to 3 and $\cK(N_1)=\{[c_2,c_5],[c_3,c_6],[c_4,c_1]\}$.
\end{esempio}
%
\begin{prop}
  The following properties hold for each $k=0,1,\dots,p-1$:
  %
  \begin{enumerate}
  \item $K(\bar f_{r_k-i}) = [c_{2r_k-i-1},c_{r_k-i-1}]$;
  \item $K(N_k)\subset\Jall(N_k)$;
  \item for $k\leq p-2$, $\cK(N_k)$ is a trapping region and $\Jall(N_{k+1})\subset K(N_k)$.
  \end{enumerate}
\end{prop}
\begin{proof}\
  \blue
  Recall that $J_1(N_k)=[p_1(N_k),\hat p_1(N_k)]$ is the interval of $\cT(N_k)$ containing $c$, so that
  $$K(\bar f_1) = [\bar f_1^2(c),\bar f_1(c)] = [c_{2r_k},c_{r_k}].$$
  Moreover, each map $f|_{J_i(N_k)}:J_i(N_k)\to J_{i+1}(N_k)$, $i=2,\dots,r_k$, where we set $J_{r_k+1}(N_k)=J_1(N_k)$,
  is a homeomorphism for $i>1$, and so $K(\bar f_{r_k}) = f^{-1}(K(\bar f_1))\cap J_{r_k}(N_k)=[c_{2r_k-1},c_{r_k-1}]$ and, ultimately,
  $$
  K(\bar f_{r_k-i}) = [c_{2r_k-i-1},c_{r_k-i-1}].
  $$
  The assumption that $N_k$ is not the last node implies the following two facts:
\begin{enumerate}
  \item 
  $K(\bar f_{1})\subset int(J_1(N_k))$. Otherwise, we would have $\bar f_1(c)=p_1(N_k)$ or $\bar f_1(c)=\hat p_1(N_k)$
  and, in both cases, $N_k$ would be the attractor and there could be no other node besides it. Hence, in general,
  $K(\bar f_{i})\subset int(J_{i}(N_k))$ for $i=1,\dots,r_k$, which proves point (2).
  \item
    Since the cycle $\Gamma(N_k)$ is repelling and, for $k<p-1$,  $N_{k+1}$ is
  repelling too, then by Proposition~\ref{prop:[c2,c1]} we have that the core of each $\bar f_i$
  is forward-invariant with respect to $\bar f_i$. Indeed, each $\bar f_i$ is a S-unimodal map on $J_i(N_k)$ whose
  non-boundary fixed point $\bar p_i$ must be at the right of the critical point, since otherwise
  $\bar p_i$ would be attractive and there would be no other node inside $\Jall(N_k)$.
  Hence
  $$
  f(K(\bar f_1)) = f^2(K(\bar f_{r_k})) = \dots = f^{r_k}(K(\bar f_2)) \subset  K(\bar f_2)
  $$
  and, more generally, $f(K(\bar f_i))\subset f(K(\bar f_{i+1}))$, $i=1,\dots,r_k$, where we put $\bar f_{r_k+1}=\bar f_1$.
\end{enumerate}

  Since the attractor is unique, it must lie inside $K(N_k)$ and so $\Jall(N_{k+1})$ must be completely
  contained inside $K(N_k)$ as well. In particular, $c\in J_1(N_{k+1})\subset K(\bar f_1)$,
  so $\cK(N_k)$ is a trapping region, proving point (3).
\end{proof}
\begin{defn}
  We define sets $U_1,\dots,U_p$ as follows:
  \begin{itemize}
  \item $U_p=N_p$;
    \item $U_{k}=K(N_{k-1})\setminus K(N_{k})$ for $1\leq k< p-1$;
    \item if $p<\infty$, $U_{p-1}=K(N_{p-2})\setminus N_p$; 
  \end{itemize}
  \noindent
  Finally, we set $U_0=[a,c_2)$ and $U_{-1}=(c_1,b]$. 
  We say, for short, that $x\in[a,b]$ is a {\em level-$k$ point} if $x\in U_k$.
\end{defn}
\begin{prop}
  The sets $U_{-1},\dots,U_p$ satisfy the following properties:
  \begin{enumerate}
  \item $[a,b]=\bigsqcup_{i=-1}^pU_i$;
  \item $N_k\subset U_k$ for $k=0,\dots,p$;
  \item $U_k\subset\Jall(N_{k-1})$ for $k=1,\dots,p$.
  \end{enumerate}
\end{prop}
\begin{proof}\
  1. By construction, the $U_i$ are all pairwise disjoint. Assume first that $p<\infty$. Then
  $$
  \bigcup_{i=1}^pU_i = (K(N_0)\setminus K(N_1)) \cup (K(N_1)\setminus K(N_2)) \cup \dots
  \cup (K(N_{p-2})\setminus N_p)\cup N_p = K(N_0) = [c_2,c_1]
  $$
  so that $\cup_{i=-1}^pU_i=[a,c_2)\cup[c_2,c_1]\cup(c_1,b]=[a,b]$. If $p=\infty$, then
  $$
  N_p = \bigcap_{k\geq1} \overline{\Jall(N_k)} = \bigcap_{k\geq1} K(N_k) 
  $$
  and so, even in this case, $\cup_{k\geq1}U_k\cup U_\infty=[a,b]$.
   
  2,3. Since $U_k\subset K(N_{k-1})\subset \Jall(N_{k-1})$, $U_k\cap N_i=\emptyset$ for $i\leq k-1$.
  Since $U_{k+1}\subset K(N_{k})\subset \Jall(N_{k})$, $U_{k+1}\cap N_k=\emptyset$. More generally, since
  $K(N_{k+1})\subset K(N_k)$, $U_{i}\cap N_k=\emptyset$ for $i\geq k+1$. Since every point in $[a,b]$
  belongs to some $U_i$, the only possibility if that $N_k\subset U_k$ for each $k=0,\dots,p-1$.

\end{proof}
\black
\begin{figure}
 \centering
 \includegraphics[width=16.5cm]{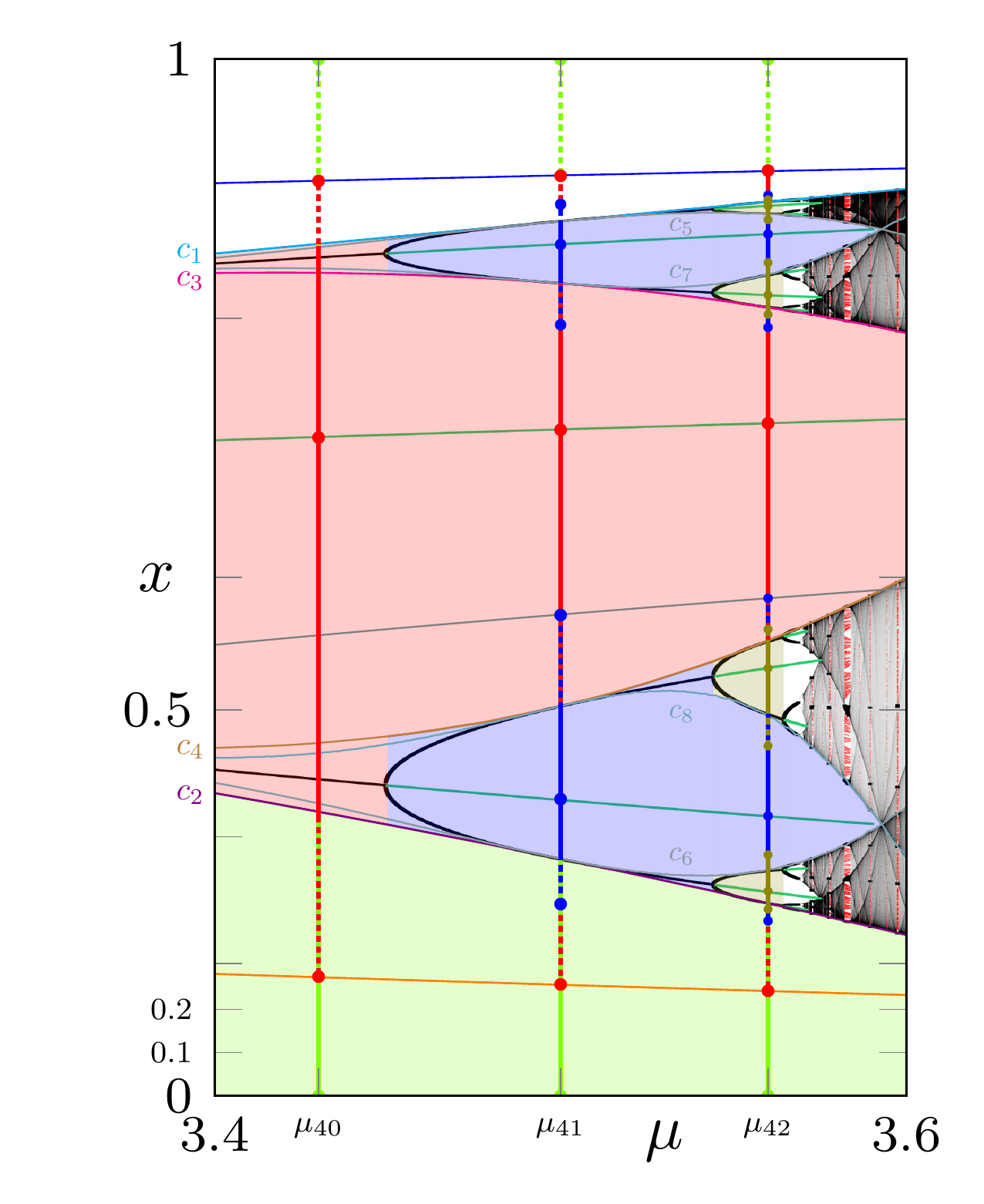}
 \caption{{\bf Examples of $U_k$ sets in case of flip trapping regions.} This picture is discussed
   in detail in Sec.~\ref{sec:ex2}.
  }
 \label{fig:U1}
\end{figure}
\begin{esempio}
  Consider the case, shown in Fig.~\ref{fig:U1}, of the logistic map $f$ with $\mu=\mu_{40}$.
  This map has three nodes: $N_1$ is the fixed point other than $x=0$ and the attractor $N_2$
  is a 2-cycle $\{p'_1,p'_2\}$.
  In this case
  $$
  U_2=\{p'_1,p'_2\},\;
  U_1=[c_2,p'_1)\cup(p'_1,p'_2)\cup(p'_2,c_1].
  $$
\end{esempio}
\begin{figure}
 \centering
 \includegraphics[width=16cm]{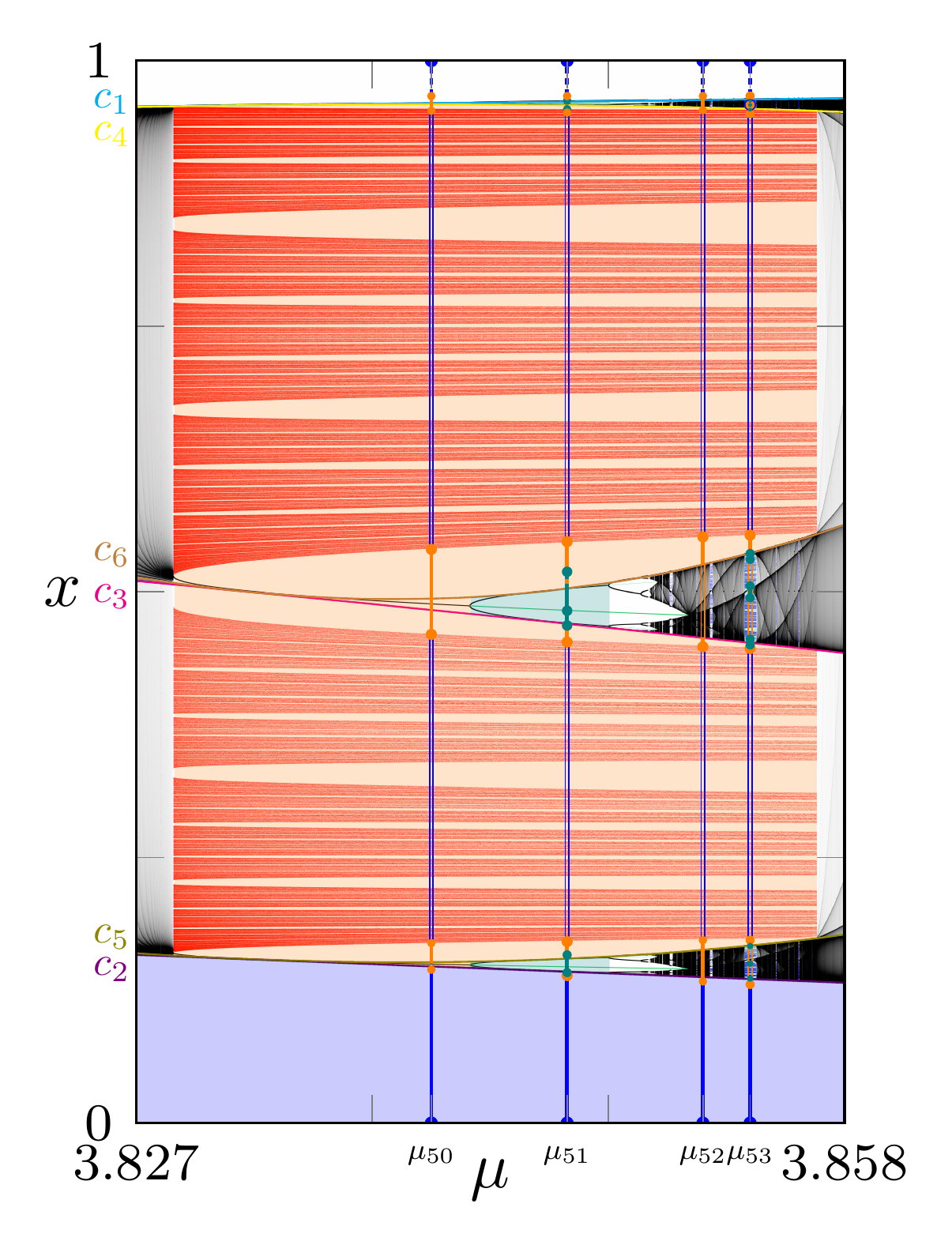}
 \caption{{\bf Examples of $U_k$ sets in case of regular trapping regions.} This picture is discussed
   in detail in Sec.~\ref{sec:ex2}.
  }
 \label{fig:U2}
\end{figure}
\begin{figure}[ht]
 \centering
 \includegraphics[width=15.5cm]{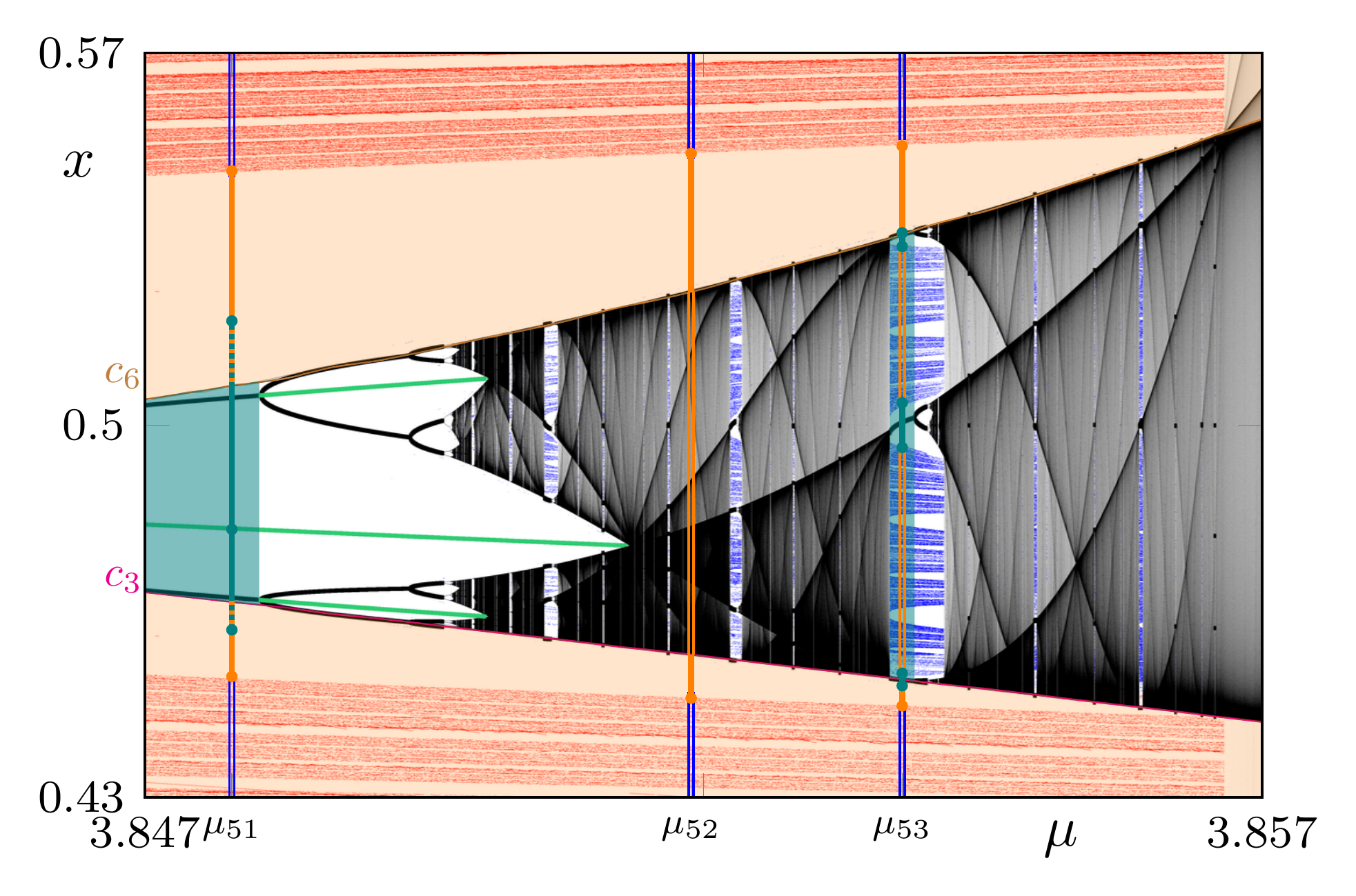}
  \caption{{\bf Close-up of the central cascade in Fig.~\ref{fig:U2}.} This picture is discussed
   in detail in Sec.~\ref{sec:ex2}.
  }
  \label{fig:U3}
\end{figure}
\begin{esempio}
  Consider the case, shown in Fig.~\ref{fig:U2}, of the logistic map with $\mu=\mu_{53}$.
  This map has four nodes: $N_1$ is a Cantor set, painted in red; $N_2$ is a second Cantor set,
  painted in blue; the attractor $N_3$, three points of which are visible (in black) in Fig.~\ref{fig:U3},
  is a 9-cycle.
  In this case,
  $$
  U_3=N_3,\;
  U_2=[c_2,c_5]\cup[c_3,c_6]\cup[c_4,c_1]\setminus N_3,\;
  U_1=(c_5,c_3)\cup(c_6,c_5).
  $$
\end{esempio}
%
%
%
%

\subsection{Forward limits.}
\label{sec:f}
While the focus of this article is on backward limits, for completeness we include also the following result about the relation between
the forward limit of a point and its position.
\begin{thm}[{\bf $\bm\omega$-limits of S-unimodal maps.}]
  Assume that $f$ satisfies (A) and let $x\in U_k$. Then $\omega_f(x)\subset N_i$ for some unique $i\geq k$.
\end{thm}
\begin{proof}\
By the Chain-Recurrent Spectral Theorem, we know the following:
  \begin{enumerate}
    \item Trapping regions are nested one into the other and are all forward invariant;
    \item $N_j\subset\Jall(N_k)$ for all $j>k$;
    \item $N_j\cap\Jall(N_k)=\emptyset$ for all $j\leq k$;
    \item $N_k\cap\overline{\Jall(N_k)}$ is a single periodic orbit.
  \end{enumerate}
  %
  Hence, the $\omega$-limit set of any $x$ which
  lies into $U_k$ must be contained in one of the nodes $N_i$ for some $i=k,\dots,p$.
\end{proof}
In particular, this means that the set of possible outcomes for $\omega_f(x)$ is a locally constant monotonically
decreasing function of the distance of $x$ from the critical point $c$.
\begin{rmk}
  By Theorem~\ref{thm:oR}, $\omega(x)$ must be either a cycle or a Cantor set or an interval cycle.
  Moreover, for Lebesgue-almost all $x$, $\omega(x)=N_p$~\cite{Guc79,Guc87}. Since all other nodes are repelling,
  \blue
  when $\omega(x)\subset N_i$ for some $i\neq p$ the only possibility is that
  \black
  there is a forward trajectory starting at $x$ that actually falls on $N_i$ in a finite number of iterations.
  Equivalently, such $x$ must belong to a backward trajectory from some point in $N_i$.
\end{rmk}
%
We turn now to the main goal of this article, which is studying the possible $s\alpha$-limits
of a point depending on its level, namely depending on which $U_k$ set it belongs to.
Following Conley's observation that points either belong to some node or are gradient-like, we
discuss separately these two cases. All these partial results will be finally summarized in
Thm.~\ref{thm:main} (Sec.~\ref{sec:main}).
%
\subsection{Backward limits within repelling nodes.}
\label{sec:br}
As we recalled earlier, repelling nodes are either cycles (type $R_1$) or
\blue
transitive (see Def.~\ref{def:transitive}) Cantor sets (type $R_2$).
\black
The restriction of $f$ to either case is topologically conjugated to a subshift of finite type, a fact that was
proved first by van Strien in 1981~\cite{vS81} using kneading theory.
For sake of clarity and self-completeness, we sketch below a more explicit proof of this fact that
will also lead us to establish that $s\alpha_f(x)\supset N$ for each point $x$ belonging to a repelling node $N$.

{\bf The case of node $N_1$ in the period-3 window of $\bm{\ell_\mu}$.}
Consider the concrete case of the period-3 window $W$ of the logistic map $f=\ell_\mu$ (see Figs.~\ref{fig:bd} and ~\ref{fig:U2}).
For every $\mu$ in the interior of $W$, the point $p_1(N_1)$ belongs to a repelling 3-cycle and
$\cT(N_1)=\{J_1,J_2,J_3\}$ is a regular period-3 trapping region. Since the $J_i$ are disjoint,
the set $C=[0,1]\setminus\cup_{k=0}^\infty f^{-k}(J_1)$ of all points that do not ever fall in $J_1$ is the
complement of a dense countably infinite union of disjoint intervals and so is a Cantor set (see Prop.~3.21
in~\cite{DLY20} for more details).
 
{\bf Decomposing $\bm C$.}
Recall that, {\blue when $\bar p>c$}, each node different from $N_0$ lies inside $[f^2(c),f(c)]$ and that, since $c$ belongs to either the
basin of the attractor or to the attractor itself, each repelling node actually lies in $L_c=(f^2(c),f(c))$.
Hence $N_1\subset C\cap L_c$.
Furthermore, by construction we have that $$N_1\subset C\cap\left(L_c\setminus\Jall(N_1)\right).$$
The set $L_c\setminus\Jall(N_1)$ is the disjoint union of the two closed intervals
$$A_0=[p_3,1-p_1]\hbox{ and } A_1=[p_1,p_2]$$
and so, correspondingly, we can write
$$C\cap L_c=C_0\sqcup C_1,$$ with $C_i=C\cap A_i$, $i=0,1$.

{\bf A map $\bm{s:C_0\sqcup C_1\to\Sigma_2}$.}
Exploiting the decomposition above, one can associate to the trajectory $\{f^n(p)\}$ of each point $p\in C\cap L_c$
an element $s(p)=(s_0,s_1,\dots)$ of the single-sided shift space $\Sigma_2$ in two symbols, say 0 and 1, so that $s_n=0$
if $f^n(p)\in C_0$ and $s_n=1$ otherwise.

For instance, for every $\mu\in W$, the map $f$ has a fixed point $\bar p$ inside
$C_1$, so
$$s(\bar p)=(111\dots).$$
\blue 
Since $f^2$ is a polynomial of degree 4 and both fixed points of $f$ are also fixed points
of $f^2$, for every $\mu\in W$ the map $f$ has a single 2-cycle
$\{\bar q_1,\bar q_2\}$, $\bar q_1<\bar q_2$. Note that it is impossible that $\bar q_2<c$ since $f(\bar q_1)=\bar q_2$
would imply $f(\bar q_2)>\bar q_2$.
Moreover, since within $W$ this 2-cycle is repelling, then necessarily $f^2(c)<c$ and so $c\in(\bar q_1,\bar q_2)$.
\black
Hence,
$$
 s(\bar q_1)=(0101\dots),\;\;
 s(f(\bar q_1))=s(\bar q_2)=(1010\dots).
$$
 Finally, in case of the 3-cycle $\Gamma(N_1)$, we have that
 $$
 s(p_1(N_1)) = (110110\dots), \;
 s(f(p_1(N_1)))=s(p_2)=(101101\dots), \;
 s(f^2(p_1(N_1)))=s(p_3)=(011011\dots).
 $$

Notice that, by construction,
$$s(f(p)) = \sigma(s(p)),$$
where $\sigma:\Sigma_2\to\Sigma_2$ is the standard shift operator $\sigma((a_1a_2a_3\dots)) = (a_2a_3\dots)$. 

{\bf Each $\bm{s(p)}$ misses the word ``00''...}
While $(111\dots)$ is in the range of $s$, $(000\dots)$ is not, since the other fixed point of $f$,
namely the node $N_0$, does not belong to $N_1$. This is an example of the following more general property:
$\sigma(p)$ cannot contain the word ``00''.
\blue
Indeed, since $[p_3,1-p_1]\subset[0,c]$,
$$
f(A_0)=f([p_3,1-p_1]) = [f(p_3),f(1-p_1)]=[p_1,p_2]=A_1
$$
while, since $[p_1,p_2]\subset[c,1]$,
$$
f(A_1)=f([p_1,p_2]) = [f(p_2),f(p_1)]=[p_3,p_2]=A_0\cup J_1(N_1)\cup A_1.
$$
\black
Since $C\cap L_c$ is invariant under $f$, this means ultimately that
$$
f(C_0)\subset C_1,\;
f(C_1)\subset C_0\sqcup C_1.
$$
In particular, if the $n$-th component of $s(p)$ is 0, then its $(n+1)$-th component must be 1.

{\bf ...but misses no other word.}
Denote by $W_{00}$ the subset of $\Sigma_2$ of all elements not containing the word ``00''.
Clearly $W_{00}$ is a subshift of finite type, namely it is invariant under $\sigma$ and is defined by a finite
number of conditions (in fact, by just one condition). We claim that, for each element $w=W_{00}$, there is $p\in C\cap L_c$
such that $s(p)=w$.

\blue
Indeed, let $w=(w_1w_2w_3\dots)$ and define the following sets:
\begin{eqnarray*}
I_{w,1} &=& A_{w_1}\\
I_{w,2} &=& A_{w_1}\cap f^{-1}(A_{w_2})\\
I_{w,3} &=& A_{w_1}\cap f^{-1}(A_{w_2})\cap f^{-2}(A_{w_3}) = A_{w_1}\cap f^{-1}(A_{w_2}\cap f^{-1}(A_{w_3}))\\
I_{w,4} &=& A_{w_1}\cap f^{-1}(A_{w_2})\cap f^{-2}(A_{w_3}) \cap f^{-3}(A_{w_4}) = A_{w_1}\cap f^{-1}(A_{w_2}\cap f^{-1}(A_{w_3}\cap f^{-1}(A_{w_4})))\\
\dots
\end{eqnarray*}
By construction, $I_{w,k}$ is the open subset of $C$ containing all points whose first $k$ symbols
coincide with the first $k$ symbols of $w$.
Set $I_w=\cap_{k=1}^\infty I_{w,k}$. Then $I_w\subset C_0\sqcup C_1$ since the orbit of any point of $I_w$
never leaves $A_0\sqcup A_1$ and $s(p)=w$ for every $p\in I_w$.

In order to show that $I_w$ is non-empty for each $w\in W_{00}$, we point out that,
since $f$ is unimodal, the counterimage of any interval not containing $c$ is the union of
two intervals lying on opposite side with respect to $c$. Given the action of $f$ on $A_0$ and $A_1$, we furthermore
have that the counterimage under $f|_{A_0\sqcup A_1}$ of any interval in $A_0$ is a subinterval of $A_1$ while the
counterimage of any interval in $A_1$ is a pair of intervals, one in $A_0$ and one in $A_1$.
Hence, from the expression of $I_{w,k}$, it is clear that this set is empty if and only if, for some $r\leq k$, the interval
$A_{w_r}\cap f^{-1}(\dots)$ lies in $A_0$ (so $w_r=0$) and, at the following
step $A_{w_{r-1}}\cap f^{-1}(A_{w_r}\cap f^{-1}(\dots))$, we have $A_{w_{r-1}}=A_0$ (namely $w_{r-1}=0$).
Ultimately, this argument shows that $I_w$ is empty if and only if $w$ contains the word ``00''.
\black

{\bf $\bm s$ is injective}. Since $I_w\subset C_0\sqcup C_1$ is a closed interval and $C_0\sqcup C_1$
is a Cantor set, the only possibility is that $I_w$ is a single point, namely $s:C_0\sqcup C_1\to W_{00}$
is a bijection.


{\bf $\bm s$ is a homeomorphism}. A basic neighborhood $U_w$ of a point $w_0\in W_{00}$ is the set of all
$w\in W_{00}$ that coincide with $w_0$ up to their $r$-th component for some $r>0$. The set $s^{-1}(U_w)$
is the set we denoted above by $I_{w,r}$ and, as we pointed out above, this is always a neighborhood of
$s^{-1}(w)$. Similarly, due to the continuity of $f$, given any $p\in C_0\sqcup C_1$, for each $M>0$
there is a neighborhood $V_p$ of $p$ such that $s(p)$ and $s(p')$ coincide up to their first $M$
symbols for each $p'\in V_p$

{\bf $\bm{f|_{N_1}}$ is topologically conjugated to $\bm \sigma$}. This is an immediate consequence of the
fact that, as pointed out above, $s\circ f=\sigma\circ s$ for all points of $N_1$ and that $s$ is a homeomorphism.

{\bf $\bm{N_1=C\cap L_c}$}. Because of the points above, it is enough to show that there is an $\epsilon$-chain
between every two points of $s(C\cap L_c)$. Given $x,y\in s(C\cap L_c)$ and $\epsilon>0$, there is an element $z$
whose distance from $x$ is smaller than $\epsilon$ so that all components $z_i$ up to some finite index $n=n(\epsilon)>0$
coincide with the components $x_i$ while $z_{n+i}=y_i$ for all $i>0$. Then the sequence
$$
x,z,\sigma(z),\dots,\sigma^n(z)=y
$$
is an $\epsilon$-chain between $x$ and $y$.

Hence, $N_1$ is the set of all points in $[f^2(c),f(c)]$ that do not fall into $\Jall(N_1)$ under $f$.

{\bf The general case.}
\blue 
Consider the general case of a Cantor set retractor $N_k$. Denote by $r$ the period of the trapping region
$\cT(N_{k-1})$ and set $\bar f_i=f^r|_{J_i(N_{k-1})}$. Then each $\bar f_i:J_i(N_{k-1})\to J_i(N_{k-1})$
is a S-unimodal map having first node $\overline N_{i,0}=\{p_i(N_{k-1})\}$ and second node the repelling
Cantor set $\overline N_{i,1}=N_k\cap J_i(N_{k-1})$. Note that
$
|\cT(\overline N_{i,1})|=|\cT(N_{k})|/|\cT(N_{k-1})|,\;i=1,\dots,r.
$

Similarly to the case above, $\overline N_{i,1}\subset[c_{2r-i-1},c_{r-i-1}]$ and the set
$(c_{2r-i-1},c_{r-i-1})\setminus\Jall(\overline N_{i,1})$ is the disjoint union of $r-2$ closed intervals
$A_{i,1},\dots,A_{i,r-2}$. Hence, like we did above, we can associate to each point 
$p\in (c_{2r-i-1},c_{r-i-1})\setminus\cup_{k=0}^\infty \bar f_i^{-k}(J_1(\overline N_{i,1}))$ an element $\sigma_i(p)$
belonging to the single-sided shift space $\Sigma_{r-2}$. Correspondingly to the action of $\bar f_i$ on the
$A_{i,k}$, there will be some number of words that will not appear in any $\sigma_i(p)$. Since there
is a finite number of $A_{i,k}$, there will be a finite number of forbidden words.  Similarly to how we did above, one can prove that these maps
$\sigma_i$ are continuous and injective and all elements of $\Sigma_{r-2}$ without those forbidden words in their sequence
are the image of some $p$. Hence, the image of each $\sigma_i$ is a subshift of finite type.

Ultimately, we illustrated the following result:
\black
\begin{theoremX}[\cite{vS81}]
  Let $N$ be a repelling node of a S-unimodal map $f$. Then $f|_N$ is topologically conjugate
  to a subshift of finite type.
\end{theoremX}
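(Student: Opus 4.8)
The plan is to handle separately the two kinds of repelling node classified by the Repelling Nodes Theorem. If $N$ is of type $R_1$ it is a single repelling flip cycle, so $f|_N$ is just a cyclic permutation of $q$ points; this is tautologically conjugate to the subshift of finite type on the alphabet $\{0,\dots,q-1\}$ whose only allowed transitions are $i\to i+1\ \bmod q$, and nothing more is needed. The whole content of the statement therefore lies in the type-$R_2$ case, and for it I would promote the period-$3$ computation carried out above into a general argument, replacing the period-$3$ data by the data furnished by the Repelling Nodes Theorem and the Chain-Recurrent Spectral Theorem.

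So let $N=N_k$ be a Cantor-set node. By the nesting of trapping regions, $N$ lives inside $\Jall(N_{k-1})$, so I would first pass to $\bar f=f^{\,r_{k-1}}$ restricted to $J_1(N_{k-1})$, which leaves that interval invariant and on which $N$ appears exactly as the Cantor node $N_1$ appears inside $L_c$ in the worked period-$3$ case (the case $k=1$, where $r_0=1$ and $\bar f=f$, recovers that computation verbatim). Writing $L=(\bar f^2(c),\bar f(c))$, the set $C$ of points whose $\bar f$-orbit never enters the interval $J_1(N)$ containing $c$ is the complement of a dense countable union of disjoint open intervals, hence a Cantor set by Prop.~3.21 of~\cite{DLY20}, and $N\subset C\cap L$. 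Deleting from $L$ the interiors of $\cT(N)$ splits it into finitely many closed intervals $A_0,\dots,A_{m-1}$ distributed on both sides of $c$, and I set $C_i=C\cap A_i$.

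Next I would code $p$ by $s(p)=(s_0,s_1,\dots)$ with $s_n=j$ iff $\bar f^{\,n}(p)\in A_j$, so that $s\circ\bar f=\sigma\circ s$ by construction. Reading off, from the action of $\bar f$ on the endpoints of the $A_i$ — which is completely pinned down by the Repelling Nodes Theorem, since the periodic orbit occupies the endpoints and every remaining endpoint eventually lands on it — exactly which one-step transitions $A_i\to A_j$ are realized yields a finite list of forbidden words (just ``$00$'' in the period-$3$ example) and hence a subshift of finite type $W\subset\Sigma_m$. That $s$ is a bijection onto $W$ follows by the same nested-interval mechanism as above: for an admissible $w$ the sets $I_{w,k}=\bigcap_{i<k}\bar f^{\,-i}(A_{w_i})$ are nonempty, nested and closed (nonemptiness failing precisely when $w$ contains a forbidden word, by the same preimage bookkeeping), so $I_w=\bigcap_k I_{w,k}$ is a nonempty closed interval inside the Cantor set $C$, hence a single point. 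Continuity of $s$ and of $s^{-1}$ is then read off respectively from continuity of $\bar f$ and from the identification of the cylinder preimages with the $I_{w,k}$, so $s$ conjugates $\bar f|_{N\cap J_1(N_{k-1})}$ with $(W,\sigma)$; since the $r_{k-1}$ intervals $J_i(N_{k-1})$ are permuted cyclically by $f$, the full map $f|_N$ is then conjugate to the $r_{k-1}$-fold cyclic tower over $W$, which is again a subshift of finite type. A final $\epsilon$-chain argument — splicing the head of one admissible sequence to the tail of another and iterating $\sigma$ — confirms $N=C\cap L$, so that the coding really describes $N$ rather than a larger invariant set.

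The step I expect to be the real obstacle is the combinatorial one: when $r_{k-1}>1$ and $m>2$, determining the exact finite set of forbidden words requires tracking carefully how $\bar f$ folds the $A_i$ across the critical point, and one must verify both that this forbidden set is genuinely finite — so that $W$ is truly of finite type — and that it coincides exactly with the obstruction to nonemptiness of the $I_w$. Once that transition data is correctly tabulated, the Cantor structure, the injectivity of $s$, the intertwining $s\circ\bar f=\sigma\circ s$, the tower lift, and the chain-recurrence identification all transfer essentially verbatim from the period-$3$ computation already displayed.
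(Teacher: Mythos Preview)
Your proposal is correct and follows essentially the same route as the paper: the paper works the period-$3$ logistic case in full detail and then declares that the general case follows \emph{mutatis mutandis} by passing to a suitable power of $f$ restricted to the appropriate $J_1$ interval and allowing more than two coding intervals $A_i$, which is precisely the generalization you outline. Your treatment is in fact slightly more explicit than the paper's on two points --- you separate out the trivial $R_1$ case, and you spell out the cyclic-tower step recovering $f|_N$ from $\bar f|_{N\cap J_1(N_{k-1})}$ --- but the underlying mechanism (Markov partition by the $A_i$, nested-interval argument for bijectivity, cylinder identification for continuity, $\epsilon$-chain splicing for $N=C\cap L$) is identical. Your stated worry about finiteness of the forbidden-word set is unfounded: since the coding uses finitely many symbols and the constraints come from which one-step transitions $A_i\to A_j$ are realized by $\bar f$, the forbidden words are automatically a finite set of length-$2$ blocks.
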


\subsection{Backward dense orbits in subshifts of finite type.}
Shifts have been thoroughly studied because they appear ubiquitously in every dynamics subfield. 
Nevertheless the following elementary result on backward dynamics, in the author's knowledge, did
not appear explicitly so far in literature.
\begin{thm}
  Let $S$ be a 1-sided subshift of finite type with a dense trajectory.
  Then each point has a backward dense bitrajectory.
\end{thm}
\begin{proof}\
  Let $F$ be the full 2-sided shift in the same symbols $\alpha_1,\dots\alpha_k$ of $S$. 
  A bitrajectory passing through an element $x=(x_1x_2x_3\dots)\in S$ is an element
  $X=(\dots x_{-3}x_{-2}x_{-1}\,.\,x_1x_2x_3\dots)\in F$.
  
  Recall that, in a (1- or 2-sided) shift space $X$ with a dense trajectory, for any two words
  $w,w'\in X$, there is a word $u\in X$ such that $wuw'\in X$~\cite{Boy00}.
  Since in $S$ there is an element $d=(d_1d_2d_3\dots)$ with a dense trajectory, the
  same is true about $F$: given any element $Y=(\dots y_{-3}y_{-2}y_{-1}\,.\,y_1y_2y_3\dots)$
  there is some finite word $u_1\dots u_r$ such that
  $$
  D=(\dots y_{-3}y_{-2}y_{-1}u_1\dots u_r\,.\,d_1d_2d_3\dots)\in F.
  $$
  By Sec.~9.1 in~\cite{LM95}, every irreducible 2-sided shift (namely a 2-sided shift with a
  dense trajectory) has a doubly transitive point $T=(\dots t_{-3}t_{-2}t_{-1}\,.\,t_1t_2t_3\dots)$,
  namely a point whose orbit is dense both backward and forward.

  Now, given any $x=(x_1x_2x_3\dots)\in S$, there is a finite word $u_1\dots u_s$
  such that $X=(\dots t_{-3}t_{-2}t_{-1}u_1\dots u_s\,.\,x_1x_2x_3\dots)\in F$. This
  point represents precisely a bitrajectory passing through $x$ whose backward 
  orbit is dense by construction. 
\end{proof}
As a corollary, we obtain our main result on retractors.
\begin{thm}
  Every point of a repelling node $N$ of $f$ has a bitrajectory backward dense in $N$.
\end{thm}
%
\subsection{Backward limits within attracting nodes.}
Of the five types of attracting nodes, only types $A_2$ and $A_5$, namely the two cases when the attractor
is chaotic, are non-trivial. Indeed, cases $A_1$ (stable periodic orbit) and $A_4$ (single-sided stable periodic
orbit belonging to a Cantor repellor) are covered by the results of the previous subsection. About case $A_3$,
we recall that it is well-known that, in a minimal set, every point has a dense backward orbit~\cite{KS09}.

{\bf Backward orbits in a chaotic attractor.}
\blue
Let $A$ be a chaotic attractor, namely an attracting trapping region. 
Recall that in $A$, as well as in every retracting node, there is a dense set of periodic points (but 
$c$ does not belong to any of them, since any cycle containing $c$ is superattracting).
In each retractor node $N_k$ we have a special cycle, the minimal cycle $\Gamma(N_k)$, which is the cycle
passing through $p_1(N_k)$, the closest point of $N_k$ to $c$. On the contrary, none of the cycles within
$A$ can be minimal, since $A$ belongs to the attracting node and nodes are disjoint.
Nevertheless, in analogy with minimal cycles, we give the following standard definition.
%
\begin{defn}
  Given $p,q\in[a,b]$, we say that $p$ is {\bf closer to $\bm c$} than $q$ if $(p,\hat p)\subset(q,\hat q)$, where as usual
  $\hat p$ is the root of $f(x)=f(p)$ other than $p$.
  Given a cycle $\gamma$, we denote by $p_1(\gamma)$ the point of $\gamma$ that is the closest
  to $c$ and we set $J_1(\gamma)=[p_1(\gamma),\hat p_1(\gamma)]$.
  We say that the cycle $\gamma$ is closer to $c$ than the cycle $\gamma'$ when
  $p_1(\gamma)$ is closer to $c$ than $p_1(\gamma')$.
\end{defn}
\begin{rmk}
  The relation among cycles given by ``being closer to $c$'' is a linear order in the set of all cycles of $f$.
  In case of the logistic map, the distance between $p_1(\gamma)$ and $c$ is the same as between
  $\hat p_1(\gamma)=1-p_1(\gamma)$ and $c$ and so the ``being closer'' relation in this case
  is literal.
\end{rmk}
%

Minimal cycles are quite special: either there are only
finitely many of them (when $N_p$ is not of type $A_3$) or $p_1(\Gamma(N_k))\to c$ (otherwise).
On the contrary, the set of accumulation points of the set of all points $p_1(\gamma)$ of all cycles $\gamma$
of a S-unimodal map $f$ having at least one repelling Cantor set is large (easy to build examples
in subshifts of finite type).
%
Next definition highlights a type of cycle which represents a slight generalization minimal ones and is of central
importance in the chaotic case.
%
\begin{defn}[\cite{Guc79}]
  Given a regular (resp. flip) $n$-cycle $\gamma\subset A$, we say that $p_1(\gamma)$ is
  {\bf regular central} (resp. {\bf flip central}) if $f^n$ (resp. $f^{2n}$) is monotonic
  on $(p_1(\gamma),c)$, namely if $c\not\in f^i(J_1(\gamma))$ for $i=1,\dots,n-1$
  (resp. $i=1,\dots,2n-1$).
When $p_1(\gamma)$ is central, we also say that $\gamma$ itself is central.
Finally, we say that a closed interval $H$ is a {\bf homterval} for $f$ if $c\not\in f^k(H)$ for any integer $k\geq0$,
namely if all iterates $f^k$, $k=0,1,\dots$, are strictly monotonic on $H$.
\end{defn}
%
%
\begin{esempio}
  Every minimal cycle $\Gamma(\cT)$ is central and its point $p_1(\cT)$ is a central point.
\end{esempio}
\begin{esempio}
  Consider the unique 2-cycle $\gamma_2=\{q_-,q_+\}$ of the logistic map. 
  For each $\mu\in[1+\sqrt{5},4]$, $\gamma_2$ is flip. For $\mu\in[1+\sqrt{5},\mu_{MF}]$, where $\mu_{MF}$
  is the Myrberg-Feigenbaum point, $\gamma_2$ is a node. Hence, close enough from the right to $\mu_{MF}$,
  $\gamma_2$ is a flip central point. On the other side, at $\mu=4$ we have that 
  $q_\pm=\frac{5\pm\sqrt{5}}{8}$, so that $q_-<c<q_+$ and $q_-$ is the closest to $c$. Set $J_1=[q_-,1-q_-]$.
  Hence
  $$
  f(J_1) = [q_+,1]
  ,\;
  f^2(J_1) = f([q_+,1]) = [0,q_-]
  ,\;
  f^3(J_1) = f([0,q_-]) = [0,q_+]
  $$
  Since $c\in f^3(J_1)$, $\gamma_2$ is not central. For $\mu$ close enough to 4 from the left,
  $c_1$ is close to 1 and both $c_2$ and $c_3$ are close to 0, so that still $c\in f^3(J_1)$.
\end{esempio}
\begin{prop}
  \label{prop:pbar}
  Let $f$ be a S-unimodal map and $\bar p$ its internal fixed point.
  There is no flip cycle if $\bar p\leq c$.
  For $c<\bar p$, the lowest-period flip central point of $f$ is $\bar p$.
  If $f$ has a regular 3-cycle $\gamma$, then $p_1(\gamma)$ is
  the lowest-period regular central point of $f$.
\end{prop}
\begin{proof}\
  Set $\bar J_1=[\hat{\bar p},\bar p]$. Then $f(J_1)=[\bar p,c_1]$,
  which does not contain $c$, and so $\bar p$ is flip central.
  Consider now the case when $f$ has a regular 3-cycle $\{p_1,p_2,p_3\}$.
  Set $J_1=[\hat p_1,p_1]$. Then
  $f(J_1) = [p_2,c_1]$ and $f^2(J_1)=[c_2,p_3]$. Clearly $c$ does not belong
  to either one of these two sets and so $p_1$ is central.
\end{proof}
%
%
%
%
The following facts are crucial for our results below:
\begin{theoremX}[\cite{Guc79}]
  \label{thm:Guc}
  Let $f$ have a chaotic attractor $A=N_p$ and set $r=|\cT(N_{p-1})|$.
  Then:
  \begin{enumerate}
  \item $c\in A$;
  \item $\alpha_f(c)=[a,b]$;
  \item for any neighborhood $U$ of the critical point, there is a central cycle $\gamma$ with $J_1(\gamma)\subset U$;
  \item for each central $s$-cycle $\gamma\subset A$, with $s>r$ if $\gamma$ is regular and $2s>r$ if it is flip,
    there is a central cycle $\gamma'\subset A$, closer to $c$ than $\gamma$, and a $k>0$ such that $f^k(J_1(\gamma'))\supset J_1(\gamma)$;
  \item $f$ has no homtervals.
  \end{enumerate}
\end{theoremX}
The following proposition is implicitly used in a proof in~\cite{Guc79} (see also Thm.~II.7.9 in~\cite{CE80}).
\begin{prop}
  \label{prop:central}
  The set $C_f$ of all central points of a S-unimodal map $f$ is finite if and only if $N_p$ is
  of type $A_1$ or $A_4$. When $C_f$ is infinite, the only point of accumulation of the set
  of all central points of $f$ is the critical point. 
\end{prop}
\begin{proof}\
  Suppose first that there are infinitely many central points. Consider a sequence $q_k$ 
  of central points of period $n_k$ such that $q_k$ is closer to $c$ than $q_{k-1}$
  for all $k$ and suppose that $q_k\to q_\infty\neq c$. Then there is some closed non-trivial interval
  $H\subset (q_\infty,1-q_\infty)$ not containing the critical point.
  Now, notice that $n_k\to\infty$ because there is a finite number of cycles of any given period.
  Hence $c\not\in f^n(H)$ for any $n\geq0$ since, for any given $n$, there are $k$ for which $n_k>n$
  and for such $k$ we must have that $c\not\in f^i(q_k,1-q_k)$ for $i=1,\dots,n_k-1$.
  So $H$ is a homterval, which contradicts point (5) of the theorem above.

  By point (3) of the previous theorem, when the attractor is chaotic (namely $N_p$ is of type
  $A_2$ or $A_5$) there are infinitely many central points. When $N_p$ is of type $A_3$,
  there are infinitely many nodes $N_k$ and each point $p_1(\cT(N_k))$ is central.
  When the attractor is a cycle $\gamma$ (namely $N_p$ is of type $A_1$ or $A_4$), then
  $\gamma$ is minimal and the interval $J^1(\gamma)$ is contained in the basin of
  attraction of $\gamma$ and so cannot contain any point belonging to a cycle.
\end{proof}
%


%
{\bf The case $\bm{N_1=A}$}. In this case there are only two nodes, the fixed endpoint $N_0$ and the chaotic
attractor $N_1=A$, which is of type $A_2$.
The chaotic attractor must consist into a single interval, since the non-internal fixed point $\bar p$ of $f$ must belong
to it (or there would be a third node) and a trapping region with more than one component cannot contain
any fixed point. Since $c\in A$, then $A=[c_2,c_1]$.

\blue
{\bf The general case}. In case $p>1$, set $r=|\cT(N_{p-1})|$. Then the $J_i(N_{p-1})$ are invariant
under $f^r$ and we set $\bar f_i = f^r|_{J_i(N_{p-1})}$. Then each $\bar f_i$ has just two nodes:
the repelling fixed point $p_i(N_{p-1})$ and the chaotic attractor $N_p\cap J_i(N_{p-1})=[c_{2r-i-1},c_{r-i-1}]$.
Hence, we reduce again the the previous case.


In order to prove the existence of backward dense orbits, we use the two general results below about
topological dynamical systems.
\begin{defn}[\cite{Bir20}]
  \label{def:transitive}
  A dynamical system $f:X\to X$  is {\bf transitive} if it has a dense orbit. 
\end{defn}
Several properties equivalent to topological transitivity can be found in~\cite{KS97}.
%
\begin{theoremX}[\cite{Mal12}]
  \label{thm:tr}
  Each transitive map $f$ has a dense set of points with a backward dense orbit.
\end{theoremX}
\begin{defn}[\cite{Par66}]
  \label{def:vstransitive}
  A dynamical system $f:X\to X$  
  is {\bf very strongly transitive} if, for every open set $U\subset X$, there is a $n\geq1$ such that $\cup_{i=1}^n f^i(U)=X$.
\end{defn}
Recently Akin, Auslander and Nagar gave the following characterization of very strongly transitive systems.
\begin{theoremX}[\cite{AAN16}]
  \label{thm:vstr}
  A dynamical system $f:X\to X$ is very strongly transitive if and only if for any $x\in X$ and $\epsilon>0$
  there is a $N>0$ such that $A_N(x)=\bigcup_{i=1}^N f^{-i}(x)$ is $\epsilon$-dense, namely $A_N(x)$
  intersect all balls of radius $\epsilon$ in $X$.  
\end{theoremX}
\begin{prop}
  The restriction of $f$ to $N_p=A$ is very strongly transitive.
\end{prop}
\begin{proof}\
  The general case argument above shows that it is enough to consider the case $p=1$.
  Since $c$ has a dense set of counterimages in $A$ and there are central cycles arbitrarily close to $c$,
  we can assume without loss of generality that $U=int(J_1(\gamma))$ for some central cycle $\gamma$.

  As shown in Prop.~\ref{prop:pbar}, the fixed point $\bar p$ is a flip central point for $f$.
  Set $\bar J_1=[\hat{\bar p},\bar p]$. Then we have that
  $f(\bar J_1)=[\bar p,c_1]$ and $f^2(\bar J_1) = [c_2,\bar p]$, so that
  $$A=f(\bar J_1)\cup f^2(\bar J_1),\;\; f(\bar J_1)\cap f^2(\bar J_1)=\{\bar p\}.$$
  
  
  Starting from the central cycle $\gamma_0=\{\bar p\}$, we can build a sequence of central cycles $\gamma_k$
  so that, for each $k\geq1$, $J_1(\gamma_k)\subset J_1(\gamma_{k-1})$ and there is a $n_k\geq2$ such that
  $f^{n_k}(J_1(\gamma_k))\supset J_1(\gamma_{k-1})$. In particular, for each $k\geq1$ there is a $N_k$ such that
  $f^{N_k}(J_1(\gamma_k))\supset J_1(\gamma_0)$.
  Since, by Proposition~\ref{prop:central}, central points can only accumulate at $c$, then $\lim_{k\to\infty}p_1(\gamma_k)=c$
  and so there is some $r\geq1$
  such that $\gamma_r$ is closer to $c$ than $\gamma$ and so
  $f^{n_r}(J_1(\gamma))\supset J_1(\gamma_0).$
  Hence $f^{n_r+1}(J_1(\gamma))\supset [\bar p,c_1]$ and $f^{n_r+2}(J_1(\gamma))\supset [c_2,\bar p]$, namely
  $
  \bigcup_{i=1}^{n_r+2}f^i(J_1(\gamma))= A.
  $
\end{proof}
%
%
Now we are ready to prove our main result on chaotic attractors.
\begin{thm}
  Every point of a chaotic attractor $A$ of $f$ has a bitrajectory backward dense in $A$. In particular, every point of
  an attracting node $N$ of type $A_2$ has a backward orbit dense in $N$.
\end{thm}
\begin{proof}\
  By Thm.~\ref{thm:tr} applied to the restriction of $f$ to $A$, there are points in $A$ having a backward orbit
  dense in $A$.
  Let $b=(b_1,b_2,...)$ be one of these backward dense orbits and let $x_0$ be any point of $A$.
  Let $x_1$ be a counterimage of $x_0$ of some order $n_1\geq0$
  closer to $b_1$ than $1/1$. Such counterimage exists by Thm.~\ref{thm:vstr}.
  Then let $x_2$ be a counterimage of $x_1$ of some order $n_2\geq 0$ closer to $b_2$ than $1/2$ and so on. All these
  counterimages exist by Thm.~\ref{thm:vstr}. The backward orbit $(x_0,x_1,x_2,\dots)$ is easily seen to be dense as well.
\end{proof}
%

\black
{\bf The case of nodes of type $A_5$}. As we pointed out in Remark~2, a node of type $A_5$
contains, besides a chaotic attractor which is also a cyclic trapping region, the following
two sets:
1) a repelling Cantor set, with which it shares a cycle, and 2) intervals of points
that are not non-wandering. Since no backward trajectory can asymptote to points that
are not non-wandering, in this case no point has a bitrajectory which is backward
dense in the whole node.
\subsection{Backward limits of non-chain-recurrent points.}
As pointed out initially by Conley in the continuous finite-dimensional case, and
extended later by Norton to the discrete case~\cite{Nor95}, all bitrajectories through
non-chain-recurrent points have their backward and forward limits belonging to different
nodes, so that they determine an edge of the graph.
Below we present two examples that turn out to be typical.

{\bf Backward limits of level-0,1 points when $N_1$ is a repelling cycle.}
This happens, for instance, in case of the logistic map $f=\ell_\mu$ for $\mu\in(3,\mu_M)$.
We already discussed in Example~5 the structure of the sets $U_k$.

Points in $U_{-1}$ have no counterimage, so $s\alpha_f(x)=\emptyset$ if $x\in U_{-1}$.

Each $x\in U_0$ has two counterimages but one of them lies in $U_{-1}$, so there is
a single bitrajectory passing through each such $x$. Since $x=0$ is backward attracting,
this bitrajectory asymptotes backward to 0. Hence, $s\alpha_f(x)=N_0$ if $x\in U_{0}$.

More generally, each point in $[0,c_1]$ has a backward trajectory asymptoting to $0$
since any interval $[0,\epsilon]$, $\epsilon>0$, will eventually cover the whole image
of $f$ under iterations, so $s\alpha_f(x)\supset N_0$ if $x\in[0,c_1]$.
Indeed, recall that an interval not containing $c$ such that none of its iterates contain
$c$ is called a {\em homterval} (see~\cite{DLY20} for more details on homtervals and on the
argument below). As a consequence of a fundamental result of Guckenheimer~\cite{Guc79},
the only configuration when a homterval can arise in a S-unimodal map is when
the attractor $N_p$ is a cycle and the node $N_{p-1}$ is also a cycle. In that case,
the interval between a point $x\in N_p$ and a point $y\in N_{p-1}$ such that no other node
point lies in $[x,y]$ is a homterval. No other type of homterval can arise. 
This means that, in all possible cases, under iterations $[0,\epsilon]$ will eventually
cover $c$ and so, in the following step, will cover all points in the image of the map.

Now consider the period-2 cyclic trapping region of $N_1$, namely the two intervals
$$
J_1(N_1)=[q_1,p_1], J_2(N_1)=[p_1,q_2]
$$
where $q_1=\hat p_1$ and $f(q_2)=q_1$. Notice that both $J_1$ and $J_2$ are forward
invariant under $\bar f=f^2$ and let us set $g=f|_{J_2}$.
Since $c\not\in J_2$, the map $g:J_2\to J_1$ is a diffeomorphism.
For the same reasons above, within $J_1$ only points in $[\bar f(c),p_1]=[c_2,p_1]$
have a backward trajectory asymptoting to $p_1$. Hence, in $J_2$, only points in
$[g^{-1}(c_2),g^{-1}(p_1)]=[p_1,c_1]$ have a backward trajectory asymptoting to $p_1$.
This means ultimately that the only points with backward trajectories asymptoting
to $p_1$ are the points in the interval $[c_2,c_1]$. Since $U_1\subset[c_2,c_1]$,
we have that $s\alpha_f(x)=N_0\cup N_1$ if $x\in U_1$.


{\bf Backward limits of level-0,1 points when $N_1$ is a repelling Cantor set.}
This happens, for instance, in case of the logistic map $f=\ell_\mu$ for $\mu\in(1+2\sqrt{2},3.868\dots)$.
We already discussed in Example~6 the structure of the sets $U_k$.

Just like the case above, points in $U_{-1}$ have no bitrajectories passing through them
and all points in $[0,c_1]$ have a backward trajectory asymptoting to $0$.

As discussed in Sec.~4.2, the Cantor set $N_1$ is the set of all chain-recurrent points
in $[c_2,c_1]$ that do not lie in $\Jall(N_1)$. Each point $x\in [c_2,c_1]$ is either a point
of $N_1$ or lies in a counterimage of $J_1(N_1)$. The case of points of $N_1$ has been
discussed in Sec.~4.2.
\blue
In the other case, since by definition $N_1=[c_2,c_1]\setminus\cup_{k\geq0}f^{-k}(J_1(N_1))$,
every point in $[c_2,c_1]\setminus N_1$ belongs to a backward trajectory starting in the interior of $J_1(N_1)$
and asymptoting to $N_1$.
\black
In any case, then, $s\alpha_f(x)=N_0\cup N_1$ if $x\in U_1$.
\subsection{Main theorem.}
\label{sec:main}
We are now ready to prove the
main result of the article.
\blue
\begin{thm}[{\bf $\bm{s\alpha}$-limits of S-unimodal maps}]
  \label{thm:main}
  Assume that $f$ satisfy (A). If $x$ is a level-$k$ point, $k<p$, then $s\alpha_f(x)=\bigcup\limits_{i=0}^kN_i$.
  If $k=p$ and $N_p$ is not of type $A_5$, then $s\alpha_f(x)=\bigcup\limits_{i=0}^pN_i=\Omega_f$ for all level-$p$
  points. If $N_p$ is of type $A_5$, denote by $C$ and $A$ respectively the repelling Cantor set 
  and the attractor contained in $N_p$.
  Then $s\alpha_f(x)=\Omega_f$ if $x\in A$ and $s\alpha_f(x)=\bigcup\limits_{i=0}^{p-1}N_i\cup C=\Omega_f\setminus int(A)$
  otherwise.
\end{thm}
\begin{proof}\
  We start by pointing out that, for $k=0,\dots,p-1$, $K(N_k)$ is the set of all points of $\overline{\Jall(N_{k})}$
  that have a backward trajectory asymptoting to $N_{k+1}$.
  Indeed, let $r$ be the period of $\cT(N_k)$ and set $\bar f = f^{r}$. Then each $J_i(N_k)$ is forward-invariant
  under $\bar f$ and after restricting $\bar f$ to each of the $J_i(N_k)$ we reduce to either one of the two case examples
  in Section~4.5.

  Since $K(N_{k})\subset K(N_{k'})$ for $k'\leq k$, then every point of $K(N_{k})$ has also a
  backward trajectory asymptoting to $N_{k'+1}$ for $k'\leq k$. Since $U_k\subset K(N_{k-1})$, then
  $s\alpha_f(x)\supset\cup_{i=0}^kN_i$ for every $x\in U_k$. Moreover, since
  $U_k\cap\Jall(N_{k+1})=\emptyset$, then no point of $U_k$ can asymptote to nodes with $k'>k$.
  This shows that $s\alpha_f(x)=\cup_{i=0}^kN_i$ for every level-$k$ point, $k<p$.

  Consider now the case $k=p$. As showed in Sec.~4.4, each point of the attracting node $N_p$ has a backward
  trajectory dense in $N_p$ for all types of attracting node except $A_5$, so in all those cases
  $s\alpha_f(x)=\cup_{i=0}^pN_i=\Omega_f$ for every level-$p$ point.

  When $N_p$ is of type $A_5$, the node contains points which are not non-wandering and these points
  cannot be obtained as limit of backward trajectories.
  Recall that, in this case, the attractor is a cyclic
  trapping region and $C$ is the set obtained from $N_p$ after removing from it all counterimages
  of the component of $A$ containing the critical point $c$.
  Hence, if $x\in A$ then its $s\alpha$-limit contains both $A$, because each
  point in a chaotic attractor has a backward dense orbit, and $C$, because of the relation
  mentioned above between $A$ and $C$, so that $s\alpha_f(x)=\Omega_f$.
  For the same reasons, if $x\in N_p\setminus A$, then $s\alpha_f(x)=\cup_{i=0}^{p-1}N_i\cup C$.
\end{proof}
%
\black
\subsection{Examples of $U_k$ sets.}
\label{sec:ex2}
Several examples of $U_k$ sets are shown in Figs.~\ref{fig:U1}-\ref{fig:U3} in the concrete case of the
logistic map $\ell_\mu$.

Fig.~\ref{fig:U1} uses Fig.~\ref{fig:T1} as background, although we omit the points labels to improve
the picture's readability. Over this background, we overlap the curves $c_k=\ell_\mu^k(c)$, $1\leq k\leq8$,
and we shade points of the sets $U_0$ in lime and $U_1$ in red throughout the whole range of $\mu$;
we paint the sets $U_2$ in blue in the range when node $N_2$ is repelling and is equal to the 2-cycle $\{p'_1,p'_2\}$;
finally, we paint in olive the sets $U_3$ in the range when node $N_3$ is repelling and is equal to the
4-cycle $\{p''_1,\dots,p''_4\}$ for $\mu$ close enough to the bifurcation of the 2-cycle attractor.

Note that we shade points in $U_k$ with the same color as $\cT(N_k)$. We leave $U_{-1}$, the set of points
through which passes no bitrajectory, in white.
Moreover, we dash in two colors the parts of a trapping region $\cT(N_k)$ that are contained in a
$U_{k'}$ with $k'<k$. For instance, for each point $x$ of $\cT(N_2)$ that is dashed in blue and red,
we have $s\alpha(x)=N_0\cup N_1$, while for each point $y$ of $\cT(N_2)$ that is painted in full blue
and is not on the attractor we have that $s\alpha(x)=N_0\cup N_1\cup N_2$.

Close to the left boundary of the picture, the logistic map has three nodes: $N_0$ is the boundary
fixed point, $N_1$ the internal fixed point and $N_2$ a 2-cycle attractor. For all $\mu\in(3.4,3.6)$,
all points of $[0,c_1]$ have a bitrajectory asymptoting backward to $N_0$ and all points in
$K(N_0)=[c_2,c_1]$ have a bitrajectory
asymptoting backward to $N_1$. The only points with bitrajectories asymptoting backward to the attractor
$N_2$ are the points of the attractor itself.

At $\mu_0=1+\sqrt{6}\in(\mu_{40},\mu_{41})$, the 2-cycle undergoes a bifurcation, becomes repelling and a new
attracting 4-cycle $N_3$ arises. As soon as $N_2$ becomes repelling, all points in $K(N_1)=[c_2,c_4]\cup[c_3,c_1]$
(and no other point) have a bitrajectory asymptoting backward to it. This type of behavior repeats
at each bifurcation. For instance, at $\mu=\mu_{42}$ we have an attracting 8-cycle $N_4$ and all points in
$K(N_2)=[c_2,c_6]\cup[c_8,c_4]\cup[c_4,c_7]\cup[c_5,c_1]$ (and no other point) have a bitrajectory asymptoting
backward to $N_3$.

Fig.~\ref{fig:U2} uses Fig.~\ref{fig:T2} as background, although we omit the points labels to improve
the picture's readability. Over this background, we overlap the curves $c_k=\ell_\mu^k(c)$, $1\leq k\leq6$,
and we shade points of the sets $U_0$ in blue and of the sets $U_1$ in orange throughout the period-3 window.
Finally, we paint in teal the points of the sets $U_2$ in two smaller ranges of parameters, one about
the parameter value $\mu_{51}$ and one about $\mu_{53}$.
Note that we shade points in $U_k$ with the same color as $\cT(N_k)$. We leave $U_{-1}$ in white.

As in case of Fig.~\ref{fig:U1}, we dash in two colors the parts of a trapping region $\cT(N_k)$
that are contained in a $U_{k'}$ with $k'<k$. Moreover, we set to transparent the interior of the line
showing $\cT(N_k)$ when its points belong to $U_{k'}$ with $k'>k$. For instance, in Fig.~\ref{fig:U3}
at $\mu=\mu_{50}$, for all points $x\in[c_2,c_1]\subset J_1(N_0)=[0,1]$ we have that
$s\alpha_{\ell_\mu}(x)=N_0\cup N_1$, where $N_1$ is the red Cantor set, so we paint $J_1(N_0)$ as a hollow
blue line within that range. 

Close to the left endpoint of the period-3 window, the logistic map has three nodes: $N_0$ is the boundary
fixed point, $N_1$ the red Cantor set and $N_2$ is the attracting 3-cycle that arose together with
the repelling 3-cycle at the boundary of the $J_i(N_1)$ (recall that the right endpoint of the period-3
window is the parameter value at which this repelling 3-cycle falls into a chaotic attractor).
As above, all points in $[0,c_1]$ have a bitrajectory asymptoting backward to $N_0$ and all points
in $K(N_0)=[c_2,c_1]$ have a bitrajectory asymptoting backward to $N_1$.

When the attracting 3-cycle bifurcates, we have the same kind of behavior just illustrated above for
Fig.~\ref{fig:U1}. For instance, after the first bifurcation (e.g. see $\mu=\mu_{51}$) all points in
$K(N_1)=[c_2,c_5]\cup[c_3,c_6]\cup[c_4,c_1]$ have a bitrajectory asymptoting backward to the
repelling 3-cycle $N_2$.

Case $\mu_{52}$ is completely analogous to case $\mu_{50}$, the main difference being that
at $\mu_{52}$ the attractor is not a cycle but the trapping region
$$
N_2 = \{[c_2,c_5],[c_3,c_6],[c_4,c_1]\}.
$$
When $\mu=\mu_{53}$, $N_2$ is the blue Cantor set and the attractor $N_3$ is a 9-cycle.
Similarly to the case $\mu=\mu_{51}$, only points of $K(N_1)$ have a bitrajectory asymptoting backward
to the blue Cantor set, while every point in $K(N_0)$ has a bitrajectory asymptoting backward
to the red Cantor set. More details are visible in Fig.~\ref{fig:U3}, where we show a detail
of the central cascade. 

\subsection{Two corollaries.}
Theorem~\ref{thm:main} can be used to prove the Conjecture by Kolyada, Misiurewicz and Snoha
in case of S-unimodal maps as follows.
\begin{cor}
  Let $f:[a,b]\to[a,b]$ be a S-unimodal map. Then $s\alpha_f(x)$ is closed for any $x\in[a,b]$.
\end{cor}
\begin{proof}\
  The only non-trivial case is when the attractor is a Cantor set and $x$ belongs to it.
  In this case, $p=\infty$ and $N_\infty=\cap_{i<\infty}\overline{\Jall(N_i)}$.

  To see this, we first show that $\cap_{i<\infty}\overline{\Jall(N_i)}$ is a node. Indeed,
  let $x$ and $y$ any
  two points in it. Given any $\epsilon>0$, there is some $n_\epsilon$ such that there is some
  component of $U_k$ in both $(x-\epsilon,x+\epsilon)$ and $(y-\epsilon,y+\epsilon)$ for
  every $k>n_\epsilon$. Hence, there is a point $p'$ in the orbit of the periodic point $p_1(N_k)$
  in the first neighborhood and one $p ''$ in the second, so that the sequence
  $\{x,p',f(p'),\dots,f^s(p')=p'',y\}$ is an $\epsilon$-chain from $x$ to $y$.
  For the same reason, $\{y,p'',f(p''),\dots,f^{s'}(p'')=p',x\}$ is an $\epsilon$-chain from $x$ to $y$.
  Moreover,  $x$ and $y$ are both chain-recurrent due to the same argument. Hence $x$ and $y$
  belong to the same node, which can only be $N_\infty$ since there is no other node.
  Since $N_\infty$ is a subset of each $\overline{\Jall(N_i)}$, these are all points of $N_\infty$.
  
  We showed in the previous theorem that, in this case, $s\alpha_f(x)=\cup_{i=0}^\infty N_i$,
  so it is enough to show that the limit points of the set of nodes coincides with the attractor.
  Given any converging sequence $x_k\in N_k$, $k=1,2,\dots$, its limit point cannot be in
  any set $U_i$, with $i$ finite, since, from some $n$ on, every point of the sequence will be out of it. Hence
  it must belong to $N_\infty$.
\end{proof}
The corollary below is a ``$s\alpha$'' version of Theorem~\ref{thm:aR}:
\blue
\begin{cor}
  Let $f:[a,b]\to[a,b]$ be a S-unimodal map. Then $s\alpha_f(c)=J_f\cap\Omega_f$
  if and only if $N_p$ is not a superattracting cycle.
\end{cor}
\begin{proof}\
  Notice, first of all, that all repelling nodes lie in $J_f$ and that the attractor
  lies inside $J_f$ if and only if $N_p$ is of type $A_2$, $A_3$ or $A_5$ and that, in each of
  these three cases, $c\in N_p$.
  Hence, in this case,
  $s\alpha_f(c)=\cup_{k=0}^{p}N_k=J_f\cap\Omega_f$.
  Then notice that either $c\in N_p$ or $c\in U_{p-1}$. Indeed,
  when $N_p$ is of type $A_3$, we have that $c\in N_p$ and, when $N_p$ is not of type
  $A_3$, we have that $c\in\Jall(N_{p-1})\subset K(N_{p-2})$ and $U_{p-1}=K(N_{p-2})\setminus N_p$.
  Hence, 
  when $N_p$ is of type $A_1$ or $A_4$ and the attracting cycle does
  not contain $c$, we must have that $c\in U_{p-1}$ and therefore
  $s\alpha_f(c)=\cup_{k=0}^{p-1}N_k=J_f\cap\Omega_f$.
  Finally, when $N_p$ is superattracting, $s\alpha_f(c)$ contains the attractor but the attractor
  does not belong to $J_f$, so that $s\alpha_f(c)$ is strictly larger than $J_f\cap\Omega_f$.
\end{proof}
\black
%
%
%
%
\section*{Acknowledgments}
The author is in deep debt with Jim Yorke for many useful discussions on the topic
and for many suggestions that helped improving considerably the article after read-proofing
an earlier draft of it. The author is also grateful to Laura Gardini for several discussions
that helped a better understanding of some point of the article, to Mike Boyle for
clarifying some aspects of subshifts of finite type and to Sebastian van Strien for
clarifying the state of the art of backward asymptotics in one-dimensional real dynamics.
Finally, the author is grateful to the anonymous referee for comments that helped fixing
some shortcomings of the original draft.
This work was supported by the National Science Foundation, Grant No. DMS-1832126.
\bibliographystyle{amsplain}
\bibliography{refs}
\end{document}